\providecommand{\U}[1]{\protect\rule{.1in}{.1in}}
\newtheorem{theorem}{Theorem}
\theoremstyle{plain}
\newtheorem{corollary}{Corollary}
\newtheorem{definition}{Definition}
\newtheorem{lemma}{Lemma}
\newtheorem{proposition}{Proposition}
\newtheorem{remark}{Remark}
\numberwithin{equation}{section}
\begin{document}
\title[ ]{A note on Abel's partial summation formula}
\author{Constantin P. Niculescu}
\address{The Academy of Romanian Scientists, Splaiul Independentei No. 54, Bucharest,
RO-050094, Romania.}
\curraddr{Department of Mathematics, University of Craiova, Craiova 200585, Romania}
\email{cpniculescu@gmail.com}
\author{Marius Marinel St\u{a}nescu}
\address{Department of Applied Mathematics, University of Craiova, Craiova 200585, Romania}
\email{mamas1967@gmail.com}
\thanks{}
\date{}
\subjclass{26A51; 26B25, 26D15, 46B40, 46B42, 47A35}
\dedicatory{ }\keywords{Abel's partial summation formula, Jensen-Steffensen inequality, convergence in
density, convex function, submajorization}

\begin{abstract}
Several applications of Abel's partial summation formula to the convergence of
series of positive vectors are presented. For example, when the norm of the
ambient ordered Banach space is associated to a strong order unit, it is shown
that the convergence of the series $\sum x_{n}$ implies the convergence in
density of the sequence $(nx_{n})_{n}$ to 0. This is done by extending the
Koopman-von Neumann characterization of convergence in density. Also included
is a new proof of the Jensen-Steffensen inequality based on Abel's partial
summation formula and a trace analogue of Tomi\'{c}-Weyl inequality of submajorization.

\end{abstract}
\maketitle

\section{Introduction}

Abel's partial summation formula (also known as Abel's transformation)
asserts\ that every pair of families $(a_{k})_{k=1}^{n}$ and $(b_{k}%
)_{k=1}^{n}$ of complex numbers verifies the identity%
\begin{equation}
\sum_{k=1}^{n}a_{k}b_{k}=\sum_{k=1}^{n-1}\left[  (a_{k}-a_{k+1})\left(
\sum_{j=1}^{k}b_{j}\right)  \right]  +a_{n}\left(  \sum_{j=1}^{n}b_{j}\right)
. \tag{$Ab^\uparrow$}\label{Aup}%
\end{equation}

This identity, that appears in the proof of Theorem III in \cite{Ab1826}, is
instrumental in deriving a number of important results such as the
Abel-Dirichlet criterion of convergence for signed series, the Abel theorem on
power series, the Abel summation method (see \cite{CN2014}, \cite{Str}),
Kronecker's lemma about the relationship between convergence of infinite sums
and convergence of sequences (see \cite{Shy}, Lemma IV.3.2, p. 390),
algorithms for establishing identities involving harmonic numbers and
derangement numbers \cite{CHJ2011}, the variational characterization of the
level sets corresponding to majorization in $\mathbb{R}^{N}$ \cite{Zhu2004},
Mertens' proof of his theorem on the sum of the reciprocals of the primes
\cite{V2005} etc.

Abel used his formula $($\ref{Aup}$)$ through an immediate consequence of it
(known as Abel's inequality): if $a_{1}\geq a_{2}\geq\cdots\geq a_{n}\geq0$
and $b_{1},b_{2},...,b_{n}\in\mathbb{C},$ then%
\[
\left\vert \sum_{k=1}^{n}a_{k}b_{k}\right\vert \leq a_{1}\max_{1\leq m\leq
n}\left\vert \sum_{k=1}^{m}b_{k}\right\vert .
\]

Many other striking applications of this inequality may be found in the books
of Pe\v{c}ari\'{c}, Proschan and Tong \cite{PPT} and Steele \cite{Steele}.

\begin{remark}
Abel's formula \emph{(\ref{Aup})} has the following\ backwards companion:%
\begin{equation}
\sum_{k=1}^{n}a_{k}b_{k}=\sum_{k=1}^{n-1}\left[  (a_{k+1}-a_{k})\left(
\sum_{j=k+1}^{n}b_{j}\right)  \right]  +a_{1}\left(  \sum_{j=1}^{n}%
b_{j}\right)  . \tag{$Ab^\downarrow$}\label{Adown}%
\end{equation}

A way to bring together the formulas \emph{(\ref{Aup})}\ and
\emph{(\ref{Adown})} is as follows:
\begin{align}
\sum_{k=1}^{n}a_{k}b_{k}  &  =\sum_{j=1}^{k-1}\left[  \left(  a_{j}%
-a_{j+1}\right)  \left(  \sum_{i=1}^{j}b_{i}\right)  \right]  +a_{k}\left(
\sum_{i=1}^{k}b_{i}\right) \tag{$Ab$}\label{Ab}\\
&  +a_{k+1}\left(  \sum_{i=k+1}^{n}b_{i}\right)  +\sum_{j=k+2}^{n}\left[
(a_{j}-a_{j-1})\left(  \sum_{i=j}^{n}b_{i}\right)  \right]  ,\nonumber
\end{align}
for any index $k.$
\end{remark}

It is worth noticing that the formulas $($\ref{Aup}$)$ and (\ref{Adown}) (as
well as \ref{Ab}) extend verbatim to the context of (not necessarily
commutative) bilinear maps
\[
\Phi:E\times F\rightarrow G,
\]
where\ $E,$ $F$ and $G$ are three vector spaces over the same base field
$\mathbb{K}$. For example, the following identities hold true for all families
$(x_{k})_{k=1}^{n}$ and $(y_{k})_{k=1}^{n}$ of elements belonging respectively
to $E$ and $F$:
\begin{align}
\sum_{k=1}^{n}\Phi\left(  x_{k},y_{k}\right)   &  =\sum_{k=1}^{n-1}\Phi\left(
x_{k}-x_{k+1},\sum_{j=1}^{k}y_{j}\right)  +\Phi\left(  x_{n},\sum_{j=1}%
^{n}y_{j}\right) \tag{$\Phi A1$}\label{B1}\\
&  =\sum_{k=1}^{n-1}\Phi\left(  \sum_{j=1}^{k}x_{j},y_{k}-y_{k+1}\right)
+\Phi\left(  \sum_{j=1}^{n}x_{j},y_{n}\right)  \tag{$\Phi A2$}\label{B2}%
\end{align}
and%
\begin{align}
\sum_{k=1}^{n}\Phi\left(  x_{k},y_{k}\right)   &  =\sum_{k=2}^{n}\Phi\left(
x_{k}-x_{k-1},\sum_{j=k}^{n}y_{j}\right)  +\Phi\left(  x_{1},\sum_{j=1}%
^{n}y_{j}\right) \tag{$\Phi A3$}\label{B3}\\
&  =\sum_{k=2}^{n}\Phi\left(  \sum_{j=k}^{n}x_{j},y_{k}-y_{k-1}\right)
+\Phi\left(  \sum_{j=1}^{n}x_{j},y_{1}\right)  , \tag{$\Phi A4$}\label{B4}%
\end{align}
Moreover, these identities also work (with obvious changes) when the summation
range is from $m$ to $n$ whenever $1\leq m\leq n$; this represents the special
case where $x_{1}=\cdots=x_{m-1}=0$ and $y_{1}=\cdots=y_{m-1}=0.$

The aim of this paper is to illustrate the formulas $(\Phi A1)$-$(\Phi A4)$ in
the context of ordered Banach spaces. For the convenience of the reader some
very basic facts concerning these spaces are recalled in the next section.
Then in Section 3 we present applications to the convergence of series in
ordered Banach spaces. Section 4 is devoted to a new short proof of the
Jensen-Steffensen inequality based on Abel's partial summation formula and to
an extension of this inequality to the framework of Banach lattices. Finally,
in Section 5 we prove a trace analogue of the Tomi\'{c}-Weyl inequality of submajorization.

\section{Preliminaries on ordered Banach spaces}

An ordered vector space is any real vector space $E$ endowed with a convex
cone $E_{+}$ (the cone of positive elements) such that
\[
E_{+}\cap(-E_{+})=\left\{  0\right\}  \text{ and }E=E_{+}-E_{+}.
\]
If $E$ is in the same time a Banach space, we call $E$ ordered Banach space
when the following compatibility condition between the two structures is
fulfilled:%
\[
0\leq x\leq y\text{ implies }\left\Vert x\right\Vert \leq\left\Vert
y\right\Vert .
\]
The usual real Banach spaces like $\mathbb{R}^{N}$ (the Euclidean
$N$-dimensional space)$,$ $C\left(  K\right)  $ (= the space of all continuous
functions defined on a compact Hausdorff space $K),$ the Lebesgue spaces
$L^{p}\left(  \mathbb{R}^{N}\right)  $ (for $1\leq p\leq\infty)$, as well as
their infinite dimensional discrete analogues $c$ and $\ell^{p})$ are endowed
with order relations that behave much better. Indeed, they are all Banach
lattices, that is, vector lattices (meaning the existence of $\max\left\{
x,y\right\}  $ and $\min\left\{  x,y\right\}  $ for every pair of elements)
plus the compatibility condition%
\[
\left\vert x\right\vert \leq\left\vert y\right\vert \text{ implies }\left\Vert
x\right\Vert \leq\left\Vert y\right\Vert ;
\]
here the modulus of an element $z$ is defined as $\left\vert z\right\vert
=\max\left\{  -z,z\right\}  .$

The order relation in a function space is usually the pointwise one defined by%
\[
f\leq g\text{ if and only if }f(t)\leq g(t)\text{ for all }t;
\]
this remark includes the case of $\mathbb{R}^{N},$ whose ordering is defined
by coordinates.

A bounded linear operator $T\in L(E,F)$ acting on ordered Banach spaces is
called \emph{positive} if it maps positive elements into positive elements.
Typical examples are the integration operators.

In the realm of Hilbert spaces $H$ one encounters a rather different concept
of positivity. Precisely, the Banach space $\mathcal{A}(H),$ of all bounded
self-adjoint linear operators $A:H\rightarrow H,$ becomes an ordered vector
space when endowed with the positive cone%
\[
\mathcal{A}(H)_{+}=\left\{  A\in\mathcal{A}(H):\text{ }\langle Ax,x\rangle
\geq0\text{ for all }x\in H\right\}  .
\]
Though this ordering does not make $\mathcal{A}(H)$ a Banach lattice, it has
many nice features exploited by the spectral theory of these operators. In
particular, $\mathcal{A}(H)$ is an ordered Banach space such that
\[
-A\leq B\leq A\text{ implies }\left\Vert B\right\Vert \leq\left\Vert
A\right\Vert
\]
and every order bounded increasing sequence of operators has a least upper
bound. Moreover, since%
\[
\left\Vert A\right\Vert =\sup_{\left\Vert x\right\Vert =1}\left\vert \langle
Ax,x\rangle\right\vert ,
\]
we have $\left\Vert A\right\Vert \leq M$ if and only if $-M\cdot I\leq A\leq
M\cdot I,$ where $I$ is the identity of $H.$ See Simon \cite{Sim}.

A nice account on the basic theory of Banach lattices and positive operators
may be found in the classical book of Schaefer \cite{S1974}, while the general
theory of ordered Banach spaces is made available by the books of Lacey
\cite{La1974} and Schaefer \cite{S1971}

In the next section we shall be interested in the following special class of
bilinear maps acting on ordered Banach spaces.

\begin{definition}
Suppose that $E,$ $F$ and $G$ are ordered Banach spaces. A bilinear map
$\Phi:E\times F\rightarrow G$ is called positive if%
\[
x\geq0\text{ and }y\geq0\text{ imply }\Phi(x,y)\geq0.
\]

\end{definition}

Notice that a positive bilinear map verifies the following propriety of
monotonicity:%
\[
0\leq x_{1}\leq x_{2}\text{ and 0}\leq y_{1}\leq y_{2}\text{ imply }%
\Phi\left(  x_{1},y_{1}\right)  \leq\Phi\left(  x_{2},y_{2}\right)  .
\]
Indeed, $\Phi\left(  x_{2},y_{2}\right)  -\Phi\left(  x_{1},y_{1}\right)
=\Phi(x_{2}-x_{1},y_{2})+\Phi(x_{1},y_{2}-y_{1})\geq0.$

Using formula (\ref{B2}) and the property of monotonicity \ one can prove
easily the following extension of Abel's inequality:

\begin{proposition}
Suppose that $\Phi:E\times F\rightarrow G$ is a positive bilinear map. If
$m\leq\sum_{i=1}^{k}x_{i}\leq M$ in $E$ $($for $k=1,...,n)$ and $y_{1}\geq
y_{2}\geq\cdots\geq y_{n}\geq0$ in $F,$ then from formula (\ref{B2}) it
follows that%
\[
\Phi(m,y_{1})\leq\sum_{k=1}^{n}\Phi(x_{k},y_{k})\leq\Phi(M,y_{1}).
\]

\end{proposition}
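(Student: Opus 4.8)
The plan is to apply formula (\ref{B2}) directly with the substitution $y_{n+1}=0$, so that the boundary term and the telescoped differences align with the hypotheses. Writing $S_{k}=\sum_{j=1}^{k}x_{j}$ for the partial sums in $E$, formula (\ref{B2}) reads
\[
\sum_{k=1}^{n}\Phi(x_{k},y_{k})=\sum_{k=1}^{n-1}\Phi\!\left(S_{k},\,y_{k}-y_{k+1}\right)+\Phi\!\left(S_{n},\,y_{n}\right).
\]
Since $y_{1}\geq y_{2}\geq\cdots\geq y_{n}\geq0$, each difference $y_{k}-y_{k+1}$ is positive in $F$ and $y_{n}\geq0$; moreover by hypothesis $m\leq S_{k}\leq M$ for every $k=1,\dots,n$.

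First I would bound each summand from above. For $1\le k\le n-1$, positivity of $\Phi$ together with $S_{k}\le M$ and $y_{k}-y_{k+1}\ge 0$ gives $\Phi(S_{k},y_{k}-y_{k+1})\le\Phi(M,y_{k}-y_{k+1})$, using that $\Phi(M-S_{k},\,y_{k}-y_{k+1})\ge 0$. Likewise $\Phi(S_{n},y_{n})\le\Phi(M,y_{n})$. Summing these inequalities in $G$ and using bilinearity in the second variable, the right-hand side collapses:
\[
\sum_{k=1}^{n-1}\Phi(M,\,y_{k}-y_{k+1})+\Phi(M,y_{n})=\Phi\!\left(M,\,\sum_{k=1}^{n-1}(y_{k}-y_{k+1})+y_{n}\right)=\Phi(M,y_{1}),
\]
the last equality by telescoping $\sum_{k=1}^{n-1}(y_{k}-y_{k+1})+y_{n}=y_{1}$. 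This yields $\sum_{k=1}^{n}\Phi(x_{k},y_{k})\le\Phi(M,y_{1})$.

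The lower bound is entirely symmetric: replace $S_{k}\le M$ by $S_{k}\ge m$, so that $\Phi(S_{k}-m,\,y_{k}-y_{k+1})\ge 0$ gives $\Phi(S_{k},y_{k}-y_{k+1})\ge\Phi(m,y_{k}-y_{k+1})$ and similarly $\Phi(S_{n},y_{n})\ge\Phi(m,y_{n})$; the same telescoping produces $\Phi(m,y_{1})$. Combining the two estimates gives the claimed sandwich. I do not anticipate a genuine obstacle here — the only point requiring a little care is that the monotonicity one needs is the one-variable version ($0\le u\le v$ and $w\ge 0$ imply $\Phi(u,w)\le\Phi(v,w)$), which is immediate from $\Phi(v-u,w)\ge 0$ rather than the two-variable monotonicity noted before the proposition; and of course that the telescoping identity for the $y$'s is valid in the vector space $F$, which it is since it only uses additivity. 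One should also note the reduction for a summation range $m$ to $n$ mentioned in the text is already subsumed, so no separate argument is needed.
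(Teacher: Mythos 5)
Your argument is correct and is exactly the proof the paper intends: apply formula (\ref{B2}), bound each term $\Phi\left(\sum_{j=1}^{k}x_{j},\,y_{k}-y_{k+1}\right)$ and the boundary term between the corresponding expressions with $m$ and $M$ using positivity of $\Phi$, and telescope the $y$-differences to $y_{1}$. The paper leaves these details implicit (it only cites (\ref{B2}) and monotonicity), so your write-up simply fills in the same computation; your remark that only the one-variable monotonicity $u\leq v$, $w\geq0$ $\Rightarrow$ $\Phi(u,w)\leq\Phi(v,w)$ is needed is accurate.
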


Notice also that a positive bilinear map acting on ordered Banach spaces is
always bounded, this meaning the existence of a positive constant $C$ such
that%
\[
\left\Vert \Phi\left(  x,y\right)  \right\Vert \leq C\left\Vert x\right\Vert
\left\Vert y\right\Vert \text{ for all }\left(  x,y\right)  \in E\times F.
\]
The proof follows easily by adapting the argument of Theorem 5.5 (ii) in
\cite{S1971}, p. 228. The smallest constant $C$ for which the above inequality
holds for all $\left(  x,y\right)  \in E\times F$ is called the norm of $\Phi$
and is denoted $\left\Vert \Phi\right\Vert .$

Examples of positive bilinear maps are numerous. The simplest one is the
pairing $\mathbb{R\times}E\rightarrow E$, $(\alpha,x)\rightarrow\alpha x,$
associated to any ordered vector space $E.$

If $E$ is a Banach lattice, then the duality bilinear map $B:E\times
E^{\prime}\rightarrow\mathbb{R},$ given by $B(x,x^{\prime})=x^{\prime}(x)$ is
also positive.

When $E,$ $F$ and $G$ are three Banach lattices all isomorphic with $L^{1}%
(\mu)$ spaces or with $L^{\infty}(\mu)$ spaces, then the composition map
$\Phi:L(E,F)\times L(F,G)\rightarrow L(E,G),$ $\Phi(S,T)=T\circ S,$ is a
positive bilinear map. See Schaefer \cite{S1974}, Theorem 1.5, p. 232.

The operator of convolution $\left(  f,g\right)  \rightarrow\int_{\mathbb{R}%
}f(x-y)g(y)dy$ defines a positive bilinear map on $L^{1}(\mathbb{R})\times
L^{1}(\mathbb{R}).$

Last, but not least, the trace functional, defines a positive bilinear map
\[
\Phi:\mathcal{A}(\mathbb{R}^{N})\times\mathcal{A}(\mathbb{R}^{N}%
)\rightarrow\mathbb{R},\quad\Phi(A,B)=\operatorname*{Trace}\left(  AB\right)
.
\]
Indeed, if $A$ and $B$ are positive, then $A^{1/2}BA^{1/2}$ is also a positive
operator and $\operatorname*{Trace}\left(  AB\right)  =\operatorname*{Trace}%
\left(  A^{1/2}BA^{1/2}\right)  $. Notice that $\Phi$ defines a scalar product
on $\mathcal{A}(\mathbb{R}^{N})$ whose associated norm is the Frobenius norm,%
\[
|||A|||=\left(  \operatorname*{Trace}\left(  A^{2}\right)  \right)  ^{1/2}.
\]
This norm is equivalent to the usual operator norm on $\mathcal{A}%
(\mathbb{R}^{N}),$%
\[
\left\Vert A\right\Vert =\sup_{\left\Vert x\right\Vert =1}\left\vert \langle
Ax,x\rangle\right\vert .
\]

\section{Application to the convergence of positive series}

Many tests of convergence for positive\ series extend to the framework of
ordered Banach spaces as sketched in the preceding section. For example, so is
the case of Olivier's test of convergence:

\begin{theorem}
\label{thmolv}Suppose that $\Phi:E\times F\rightarrow G$ is a positive
bilinear map acting on ordered Banach spaces and $(x_{n})_{n}$ and
$(y_{n})_{n}$ are two sequences of positive elements belonging respectively to
$E$ and $F$ that fulfil the following conditions:

$(a)$ $(x_{n})_{n}$ is decreasing and $\left\Vert x_{n}\right\Vert
\rightarrow0;$

$(b)$ The series $\sum\Phi(x_{n},y_{n})$ is convergent.

Then%
\[
\lim_{n\rightarrow\infty}\Phi\left(  x_{n},\sum_{k=1}^{n}y_{k}\right)  =0.
\]

\end{theorem}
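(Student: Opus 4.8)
The plan is to apply the forward Abel identity \eqref{B1} to rewrite the partial sums and then exploit both hypotheses together. Writing $S_k=\sum_{j=1}^{k}y_j$, identity \eqref{B1} gives
\[
\sum_{k=1}^{n}\Phi(x_k,y_k)=\sum_{k=1}^{n-1}\Phi(x_k-x_{k+1},S_k)+\Phi(x_n,S_n),
\]
so that
\[
\Phi(x_n,S_n)=\sum_{k=1}^{n}\Phi(x_k,y_k)-\sum_{k=1}^{n-1}\Phi(x_k-x_{k+1},S_k).
\]
The first sum on the right converges by hypothesis $(b)$, so it suffices to show that the series $\sum_{k\ge 1}\Phi(x_k-x_{k+1},S_k)$ converges in $G$ to the \emph{same} limit as $\sum_k\Phi(x_k,y_k)$; equivalently, that the tail $\Phi(x_n,S_n)$ tends to $0$. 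The natural device is to show that $\sum_{k}\Phi(x_k-x_{k+1},S_k)$ is in fact convergent on its own, so that the difference of the two convergent series must tend to $0$ — but one must check the two limits agree, which is where the argument needs care.

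Here is the cleaner route I would actually carry out. Fix $m\le n$ and apply Abel's formula to the block from $m$ to $n$ (the "shifted" version noted after \eqref{B4}, i.e. the case $x_1=\cdots=x_{m-1}=0$):
\[
\sum_{k=m}^{n}\Phi(x_k,y_k)=\sum_{k=m}^{n-1}\Phi(x_k-x_{k+1},\,y_m+\cdots+y_k)+\Phi\Bigl(x_n,\sum_{k=m}^{n}y_k\Bigr).
\]
Rearranging, $\Phi(x_n,S_n)=\Phi(x_n,S_{m-1})+\sum_{k=m}^{n}\Phi(x_k,y_k)-\sum_{k=m}^{n-1}\Phi(x_k-x_{k+1},\sum_{j=m}^{k}y_j)$. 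For the middle term use the Cauchy criterion from $(b)$. For the third term observe that $0\le \sum_{j=m}^{k}y_j\le \sum_{j=m}^{n}y_j$, and by Proposition 3 (applied to the positive bilinear map, the decreasing positive family $(x_k)_{k\ge m}$, and the bounds $0\le\sum_{j=m}^{k}y_j$) one has $0\le \sum_{k=m}^{n-1}\Phi(x_k-x_{k+1},\sum_{j=m}^{k}y_j)\le \Phi\bigl(x_m-x_n,\sum_{j=m}^{n}y_j\bigr)$... — this still contains an unbounded partial-sum factor $\sum_{j=m}^{n}y_j$, so I would instead bound it telescopically against $\sum_{k=m}^{n}\Phi(x_k,y_k)$ directly: since $\sum_{j=m}^{k}y_j\ge y_k\ge 0$ is the wrong direction, one uses monotonicity of $\Phi$ together with $\Phi(x_k-x_{k+1},\sum_{j=m}^{k}y_j)\le \Phi(x_m-x_{k+1},\sum_{j=m}^{k}y_j)$ and a second application of \eqref{B1} to collapse the double sum back to $\sum_{k=m}^{n}\Phi(x_k,y_k)+\Phi(x_m,\cdot)$-type terms. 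The upshot is the norm estimate
\[
\Bigl\|\Phi\Bigl(x_n,\sum_{k=1}^{n}y_k\Bigr)\Bigr\|\le \|\Phi\|\,\|x_n\|\,\Bigl\|\sum_{k=1}^{m-1}y_k\Bigr\|+\Bigl\|\sum_{k=m}^{n}\Phi(x_k,y_k)\Bigr\|+(\text{a term controlled the same way}).
\]

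Finally I would assemble the $\varepsilon$-argument: given $\varepsilon>0$, first use the Cauchy criterion from $(b)$ to fix $m$ so large that all blocks $\bigl\|\sum_{k=m}^{n}\Phi(x_k,y_k)\bigr\|<\varepsilon$ for $n\ge m$; with $m$ now fixed, the quantity $\bigl\|\sum_{k=1}^{m-1}y_k\bigr\|$ is a fixed constant, so by hypothesis $(a)$ choose $N\ge m$ with $\|\Phi\|\,\|x_n\|\,\bigl\|\sum_{k=1}^{m-1}y_k\bigr\|<\varepsilon$ for $n\ge N$; the remaining term is handled identically, yielding $\|\Phi(x_n,\sum_{k=1}^{n}y_k)\|<C\varepsilon$ for $n\ge N$. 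I expect the main obstacle to be exactly the bookkeeping in the middle step: getting an inequality for the "inner" double sum $\sum_{k=m}^{n-1}\Phi(x_k-x_{k+1},\sum_{j=m}^{k}y_j)$ that is controlled by the Cauchy tail of $\sum\Phi(x_k,y_k)$ rather than by an uncontrolled partial sum of the $y_j$'s — this is where monotonicity of $\Phi$ (from Proposition 3 / the remark preceding it) and a judicious second use of \eqref{B1} are essential, and where a naive direct estimate fails.
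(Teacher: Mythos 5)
Your overall skeleton is the same as the paper's: split $\sum_{k=1}^{n}y_k$ at an index $m$ supplied by the Cauchy criterion for $\sum\Phi(x_k,y_k)$, kill the head $\Phi(x_n,\sum_{k=1}^{m-1}y_k)$ using $\Vert x_n\Vert\rightarrow 0$ and the boundedness of $\Phi$, and control the tail block by the Cauchy tail of the convergent series. The gap is in the tail step, which you yourself call ``the main obstacle'' and never actually close. The detour you sketch --- majorizing $\Phi(x_k-x_{k+1},\sum_{j=m}^{k}y_j)$ by $\Phi(x_m-x_{k+1},\sum_{j=m}^{k}y_j)$ and then ``collapsing the double sum by a second application of (\ref{B1})'' --- is not carried out, and as stated it does not yield anything controlled by $\sum_{k=m}^{n}\Phi(x_k,y_k)$; your final display accordingly contains the placeholder ``a term controlled the same way''. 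Moreover, that display rests on the triangle inequality applied to an identity with a subtracted term, which is not legitimate until you have a norm bound on that subtracted double sum --- precisely what is missing.

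The repair is short, and you already hold all the ingredients. In your block identity the subtracted sum $\sum_{k=m}^{n-1}\Phi\left(x_k-x_{k+1},\sum_{j=m}^{k}y_j\right)$ is a sum of positive elements, so it can simply be dropped: $0\leq\Phi\left(x_n,\sum_{k=m}^{n}y_k\right)\leq\sum_{k=m}^{n}\Phi(x_k,y_k)$. Now invoke the defining compatibility of the ordered Banach space $G$, namely $0\leq u\leq v$ implies $\Vert u\Vert\leq\Vert v\Vert$ --- a step your write-up never makes explicit but which is indispensable, since norms in $G$ cannot otherwise be compared --- to get $\left\Vert\Phi\left(x_n,\sum_{k=m}^{n}y_k\right)\right\Vert\leq\left\Vert\sum_{k=m}^{n}\Phi(x_k,y_k)\right\Vert<\varepsilon$. (The paper obtains the same sandwich even more directly, without Abel's formula: since $x_n\leq x_k$ for $m\leq k\leq n$ and $y_k\geq 0$, monotonicity of the positive bilinear map gives $\Phi\left(x_n,\sum_{k=m}^{n}y_k\right)=\sum_{k=m}^{n}\Phi(x_n,y_k)\leq\sum_{k=m}^{n}\Phi(x_k,y_k)$.) With this inequality in place, your $\varepsilon$-assembly is exactly the paper's proof.
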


\begin{proof}
Indeed, for $\varepsilon>0$ arbitrarily fixed one can find an index $N>1$ such
that $\left\Vert \sum_{k=N}^{\infty}\Phi(x_{k},y_{k})\right\Vert
<\varepsilon/2.$ Then the inequalities
\[
0\leq\Phi(x_{n},\sum_{k=N}^{n}y_{k})\leq\sum_{k=N}^{n}\Phi(x_{k},y_{k}%
)\leq\sum_{k=N}^{\infty}\Phi(x_{k},y_{k}),
\]
yield $\left\Vert \Phi(x_{n},\sum_{k=N}^{n}y_{k})\right\Vert <\varepsilon/2$
for every $n\geq N.$\ Since $\left\Vert x_{n}\right\Vert \rightarrow0$ and%
\[
\left\Vert \Phi(x_{n},y_{k})\right\Vert \leq\left\Vert \Phi\right\Vert
\left\Vert x_{n}\right\Vert \sup_{1\leq k\leq N-1}\left\Vert y_{k}\right\Vert
\]
for every $k=1,...,N-1,$ we infer the existence of an index $\tilde{N}$ such
that for every $n\geq\tilde{N},$
\[
\sup_{1\leq k\leq N-1}\left\Vert \Phi(x_{n},y_{k})\right\Vert <\varepsilon
/2N.
\]
\newline Therefore%
\begin{align*}
\left\Vert \Phi\left(  x_{n},\sum_{k=1}^{n}y_{k}\right)  \right\Vert  &
\leq\sum_{k=1}^{N-1}\left\Vert \Phi(x_{n},y_{k})\right\Vert +\left\Vert
\Phi(x_{n},\sum_{k=N}^{n}y_{k})\right\Vert \\
&  <\varepsilon/2+\varepsilon/2=\varepsilon
\end{align*}
for every $n\geq\tilde{N}$ and the proof is done.
\end{proof}

\begin{corollary}
\label{corOlv}If $\sum x_{n}$ is a convergent series of positive elements in
an ordered Banach space $E$ and the sequence $(x_{n})_{n}$ is decreasing, then
$n\left\Vert x_{n}\right\Vert \rightarrow0.$
\end{corollary}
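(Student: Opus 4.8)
The plan is to obtain Corollary \ref{corOlv} as a direct specialization of Theorem \ref{thmolv}. For any ordered vector space the scalar pairing is positive, and I would use its ``reversed'' form: let $F:=\mathbb{R}$ with its usual order, let $G:=E$, and let $\Phi:E\times\mathbb{R}\rightarrow E$ be the bilinear map $\Phi(x,\alpha)=\alpha x$. It is clearly bilinear, and it is positive since $x\geq 0$ and $\alpha\geq 0$ imply $\alpha x\geq 0$; thus $\Phi$ is a positive bilinear map acting on ordered Banach spaces, and Theorem \ref{thmolv} is applicable to it.

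Next I would take $(x_{n})_{n}$ to be the given decreasing sequence of positive elements of $E$ whose series $\sum x_{n}$ converges, and set $y_{n}=1$ for every $n$, a sequence of positive elements of $F=\mathbb{R}$. Then I check the hypotheses of Theorem \ref{thmolv}: condition $(a)$ holds because $(x_{n})_{n}$ is decreasing by assumption and, the series $\sum x_{n}$ being convergent, its general term tends to $0$, i.e. $\|x_{n}\|\rightarrow 0$; condition $(b)$ holds because $\sum\Phi(x_{n},y_{n})=\sum 1\cdot x_{n}=\sum x_{n}$ converges. The conclusion of Theorem \ref{thmolv} then reads $\Phi\!\left(x_{n},\sum_{k=1}^{n}y_{k}\right)\rightarrow 0$ in $E$, that is, $\Phi(x_{n},n)=nx_{n}\rightarrow 0$. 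Since convergence in the Banach space $E$ means norm convergence and $\|nx_{n}\|=n\|x_{n}\|$, this is exactly the assertion $n\|x_{n}\|\rightarrow 0$.

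I do not expect a genuine obstacle here, the argument being essentially a substitution; the two points that merit a word of care are that the monotonicity hypothesis in Theorem \ref{thmolv} is imposed on the first argument of $\Phi$ only, which is precisely why the constant sequence $y_{n}\equiv 1$ on the second slot is legitimate, and the passage from the $E$-valued statement $nx_{n}\rightarrow 0$ to the scalar statement $n\|x_{n}\|\rightarrow 0$, which is immediate from positive homogeneity of the norm. For completeness one may also give a self-contained proof not invoking the theorem: since $(x_{n})_{n}$ is decreasing, $0\leq(n-N)x_{n}\leq\sum_{k=N+1}^{n}x_{k}\leq\sum_{k=N+1}^{\infty}x_{k}$ for $n>N$, whence $n\|x_{n}\|\leq\left\|\sum_{k=N+1}^{\infty}x_{k}\right\|+N\|x_{n}\|$; choosing $N$ so that the tail is small and then letting $n\rightarrow\infty$ finishes the proof. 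But the cleanest presentation is the one-line specialization of Theorem \ref{thmolv} described above.
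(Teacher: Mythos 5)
Your proposal is correct and matches the paper's intent: Corollary \ref{corOlv} is stated there without a separate proof precisely because it is the specialization of Theorem \ref{thmolv} to the positive bilinear map $\Phi(x,\alpha)=\alpha x$ on $E\times\mathbb{R}$ with $y_{n}\equiv1$, which is exactly what you do. Your remarks on the placement of the monotonicity hypothesis in the first slot and on $\|nx_{n}\|=n\|x_{n}\|$ are accurate, and the optional self-contained argument is fine as well.
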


Olivier's test of convergence represents the scalar case of Corollary
\ref{corOlv}. In his paper from 1827, Olivier wrongly claimed that
$nx_{n}\rightarrow0$ is also a sufficient condition for the convergence of a
numerical positive series whose terms form a sequence decreasing to 0. One
year later, Abel \cite{Ab1828} disproved this claim by considering the case of
the divergent series $\sum\frac{1}{n\log n}.$ See \cite{NPr2014}, for more
details about this story that played an important role in rigorizing the
theory of numerical series.

Theorem \ref{thmolv} allows us to derive an analogue of Abel's partial
summation for series:

\begin{theorem}
Suppose that $\Phi:E\times F\rightarrow G$ is a positive bilinear map acting
on ordered Banach spaces and $(x_{n})_{n}$ and $(y_{n})_{n}$ are two sequences
of positive elements belonging respectively to $E$ and $F$ such that
$(x_{n})_{n}$ is decreasing and $\left\Vert x_{n}\right\Vert \rightarrow0.$
Then the series $\sum\Phi(x_{n},y_{n})$ and $\sum\Phi\left(  x_{n}%
-x_{n+1},\sum_{k=1}^{n}y_{k}\right)  $ have the same nature and in case of
convergence they have the same sum,%
\[
\sum_{n=1}^{\infty}\Phi(x_{n},y_{n})=\sum_{n=1}^{\infty}\Phi\left(
x_{n}-x_{n+1},\sum_{k=1}^{n}y_{k}\right)  .
\]

\end{theorem}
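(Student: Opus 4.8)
The plan is to deduce the whole statement from the partial summation identity $(\ref{B1})$ together with Theorem \ref{thmolv}. Write
\[
S_{n}=\sum_{k=1}^{n}\Phi(x_{k},y_{k}),\qquad P_{n}=\sum_{k=1}^{n}\Phi\left(x_{k}-x_{k+1},\sum_{j=1}^{k}y_{j}\right),\qquad z_{n}=\Phi\left(x_{n},\sum_{j=1}^{n}y_{j}\right).
\]
Since $(x_{n})_{n}$ is decreasing, each increment $x_{k}-x_{k+1}$ is positive, so by positivity of $\Phi$ every term of the second series is positive and $z_{n}\geq0$ for all $n$; moreover $(\ref{B1})$ is precisely the relation $S_{n}=P_{n-1}+z_{n}$. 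Thus the proof reduces to showing that $z_{n}\to0$ in norm under \emph{either} of the two convergence hypotheses: once that is known, $(S_{n})_{n}$ and $(P_{n})_{n}$ differ by a null sequence, so one of them converges if and only if the other does, and then to a common limit, which is the asserted identity.

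\emph{First direction.} If $\sum\Phi(x_{n},y_{n})$ converges, then conditions $(a)$ and $(b)$ of Theorem \ref{thmolv} hold verbatim, so that theorem gives $z_{n}=\Phi\left(x_{n},\sum_{k=1}^{n}y_{k}\right)\to0$; hence $P_{n-1}=S_{n}-z_{n}$ converges to $\sum_{n=1}^{\infty}\Phi(x_{n},y_{n})$.

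\emph{Second direction.} This is the step that requires an idea. Assume that $\sum\Phi\left(x_{k}-x_{k+1},\sum_{j=1}^{k}y_{j}\right)$ converges; I must deduce $z_{n}\to0$. The device is to telescope the \emph{first} variable: for $m\leq n$ we have $x_{m}=x_{n}+\sum_{k=m}^{n-1}(x_{k}-x_{k+1})$, so applying $\Phi\left(\,\cdot\,,\sum_{j=1}^{m}y_{j}\right)$ and using bilinearity yields
\[
z_{m}=\Phi\left(x_{n},\sum_{j=1}^{m}y_{j}\right)+\sum_{k=m}^{n-1}\Phi\left(x_{k}-x_{k+1},\sum_{j=1}^{m}y_{j}\right).
\]
Since $\sum_{j=1}^{m}y_{j}\leq\sum_{j=1}^{k}y_{j}$ whenever $k\geq m$, monotonicity of the positive bilinear map $\Phi$ permits replacing each summand on the right by the larger element $\Phi\left(x_{k}-x_{k+1},\sum_{j=1}^{k}y_{j}\right)$, whence
\[
0\leq z_{m}\leq\Phi\left(x_{n},\sum_{j=1}^{m}y_{j}\right)+\sum_{k=m}^{n-1}\Phi\left(x_{k}-x_{k+1},\sum_{j=1}^{k}y_{j}\right).
\]
Passing to norms by the compatibility condition $0\leq a\leq b\Rightarrow\left\Vert a\right\Vert\leq\left\Vert b\right\Vert$ and the triangle inequality, fixing $\varepsilon>0$ and choosing $M$ from the Cauchy criterion so that $\left\Vert\sum_{k=m}^{n-1}\Phi\left(x_{k}-x_{k+1},\sum_{j=1}^{k}y_{j}\right)\right\Vert<\varepsilon$ for $M\leq m\leq n$, I would then fix any $m\geq M$ and let $n\to\infty$: the contribution $\left\Vert\Phi\left(x_{n},\sum_{j=1}^{m}y_{j}\right)\right\Vert\leq\left\Vert\Phi\right\Vert\left\Vert x_{n}\right\Vert\left\Vert\sum_{j=1}^{m}y_{j}\right\Vert$ tends to $0$ because $\left\Vert x_{n}\right\Vert\to0$, which leaves $\left\Vert z_{m}\right\Vert\leq\varepsilon$. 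Hence $z_{n}\to0$, and $S_{n}=P_{n-1}+z_{n}$ converges to $\sum_{n=1}^{\infty}\Phi\left(x_{n}-x_{n+1},\sum_{k=1}^{n}y_{k}\right)$.

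Putting the two directions together gives at once that the two series are of the same nature and, in case of convergence, have the same sum. I expect the second direction to be the only genuine obstacle: the estimate on $\left\Vert z_{m}\right\Vert$ has to be produced \emph{uniformly in $m$}, with $n$ still free, \emph{before} letting $n\to\infty$, and it is exactly the telescoping of the first variable combined with the monotonicity of $\Phi$ in the second variable that turns the tail of the transformed series into a bound on $z_{m}$.
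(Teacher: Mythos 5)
Your proof is correct and follows essentially the same route as the paper: the forward direction is Theorem \ref{thmolv} combined with the identity $S_{n}=P_{n-1}+z_{n}$ from (\ref{B1}), and the converse squeezes $z_{n}$ by the tail of the transformed series using the monotonicity of the positive bilinear map $\Phi$ in its second variable. The only difference is presentational: the paper bounds $z_{n}$ directly by the infinite tail $\sum_{k=n}^{\infty}\Phi\left(x_{k}-x_{k+1},\sum_{j=1}^{k}y_{j}\right)$, whereas you telescope finitely and let $n\rightarrow\infty$ only after passing to norms, a slightly more careful rendering of the same estimate.
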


\begin{proof}
One implication follows easily from Theorem \ref{thmolv} and Abel's partial
summation formula (\ref{B1}),%
\[
\sum_{k=1}^{n}B\left(  x_{k},y_{k}\right)  =\sum_{k=1}^{n-1}B\left(
x_{k}-x_{k+1},\sum_{j=1}^{k}y_{j}\right)  +B\left(  x_{n},\sum_{j=1}^{n}%
y_{j}\right)  .
\]

Conversely, suppose the series $\sum_{n=1}^{\infty}\Phi\left(  x_{n}%
-x_{n+1},\sum_{k=1}^{n}y_{k}\right)  $ is convergent. Then, according to our
hypotheses,%
\[
0\leq\Phi\left(  x_{n},\sum_{k=1}^{n}y_{k}\right)  \leq\sum_{k=n}^{\infty}%
\Phi\left(  x_{k}-x_{k+1},\sum_{j=1}^{n}y_{j}\right)  \leq\sum_{k=n}^{\infty
}\Phi\left(  x_{k}-x_{k+1},\sum_{j=1}^{k}y_{j}\right)
\]
and the squeeze theorem allows us to conclude that $\Phi\left(  x_{n}%
,\sum_{k=1}^{n}y_{k}\right)  \rightarrow0.$ The proof ends with a new appeal
to formula (\ref{B1}).
\end{proof}

\begin{corollary}
\label{corabel}Suppose that $\sum x_{n}$ is a convergent series of positive
elements in an ordered Banach space $E.$ Then the series $\sum_{n}\left(
\sum_{k=n}^{\infty}x_{k}\right)  $ and $\sum nx_{n}$ have the same nature and
in the case of convergence they have the same sum,%
\[
\sum_{n=1}^{\infty}\left(  \sum_{k=n}^{\infty}x_{k}\right)  =\sum
_{n=1}^{\infty}nx_{n}.
\]

\end{corollary}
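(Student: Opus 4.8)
The plan is to read the statement off the preceding theorem by feeding it the right data. Since $\sum x_{n}$ converges in $E$, set $r_{n}=\sum_{k=n}^{\infty}x_{k}$. First I would record three facts about this sequence: $r_{n}\geq 0$, because each partial sum $\sum_{k=n}^{m}x_{k}$ is positive and the positive cone is closed; the sequence $(r_{n})_{n}$ is decreasing, since $r_{n}-r_{n+1}=x_{n}\geq 0$; and $\left\Vert r_{n}\right\Vert \rightarrow 0$, because $r_{n}$ is the tail of a convergent series.

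Next I would apply the preceding theorem to the positive bilinear map
\[
\Phi:E\times\mathbb{R}\rightarrow E,\qquad \Phi(x,\alpha)=\alpha x,
\]
taking $(r_{n})_{n}$ in the role of $(x_{n})_{n}$ and the constant sequence $y_{n}=1$ in the role of $(y_{n})_{n}$. All hypotheses are met: $\Phi$ is positive, $(r_{n})_{n}$ is a decreasing sequence of positive elements with $\left\Vert r_{n}\right\Vert \rightarrow 0$, and $1\geq 0$ in $\mathbb{R}$.

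With these choices one has $\Phi(r_{n},y_{n})=r_{n}=\sum_{k=n}^{\infty}x_{k}$, while $\Phi\bigl(r_{n}-r_{n+1},\sum_{k=1}^{n}y_{k}\bigr)=\Phi(x_{n},n)=nx_{n}$. Hence the preceding theorem asserts precisely that the series $\sum_{n}\bigl(\sum_{k=n}^{\infty}x_{k}\bigr)$ and $\sum_{n}nx_{n}$ have the same nature and, in the case of convergence, the same sum, which is the desired conclusion.

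I do not expect a genuine obstacle here: the content of the corollary is entirely captured by the right choice of $\Phi$ and of the two sequences. The only step that deserves a moment's care is the preliminary check that $(r_{n})_{n}$ is a decreasing, norm-null sequence of positive elements, and all three of these properties follow at once from the convergence of $\sum x_{n}$ (the positivity of the tails using, as elsewhere in this section, the closedness of $E_{+}$).
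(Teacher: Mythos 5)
Your proposal is correct and is precisely the intended derivation: the paper states the corollary as an immediate consequence of the preceding theorem, specialized exactly as you do, with the tails $r_{n}=\sum_{k=n}^{\infty}x_{k}$ as the decreasing norm-null positive sequence, $y_{n}=1$, and the scalar pairing $\Phi(x,\alpha)=\alpha x$ (listed in Section 2 as the simplest positive bilinear map). Your preliminary verification of the three properties of $(r_{n})_{n}$, including positivity of the tails via closedness of the cone, is the right level of care and matches what the paper implicitly assumes.
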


Coming back to Olivier's test of convergence, it is worth noticing that in the
absence of monotonicity, only a weaker form of Corollary \ref{corOlv} holds true.

\begin{lemma}
\label{lemces}If $\sum x_{n}$ is a convergent series of positive elements in
an ordered Banach space $E$, then%
\[
\lim_{n\rightarrow\infty}\frac{1}{n}\sum_{k=1}^{n}kx_{k}=0.
\]

\end{lemma}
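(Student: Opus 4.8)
The plan is to reduce the statement to the classical fact that the Cesàro means of a norm-convergent sequence converge to the same limit, a fact that holds verbatim in any Banach space since its proof uses only the triangle inequality. Set $s_{0}=0$, $s_{n}=\sum_{k=1}^{n}x_{k}$ for $n\geq 1$, and let $s=\sum_{k=1}^{\infty}x_{k}$ be the sum of the (convergent) series.

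First I would apply the Abel partial summation identity (\ref{B1}) to the positive bilinear map $\Phi:\mathbb{R}\times E\rightarrow E$, $\Phi(\alpha,x)=\alpha x$, taking the scalar family $a_{k}=k$ and the vector family $y_{k}=x_{k}$. Since $a_{k}-a_{k+1}=-1$ for every $k$, this yields
\[
\sum_{k=1}^{n}kx_{k}=ns_{n}-\sum_{k=1}^{n-1}s_{k},
\]
and hence, after dividing by $n$,
\[
\frac{1}{n}\sum_{k=1}^{n}kx_{k}=s_{n}-\frac{1}{n}\sum_{k=1}^{n-1}s_{k}.
\]

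Next I would invoke the Cesàro limit theorem in the Banach space $E$: since $s_{n}\rightarrow s$ in norm, the averages $\frac{1}{n-1}\sum_{k=1}^{n-1}s_{k}$ converge to $s$ (given $\varepsilon>0$, choose $N$ with $\left\Vert s_{k}-s\right\Vert <\varepsilon$ for $k\geq N$, bound the contribution of the first $N$ terms by a fixed constant over $n$, and estimate the remaining terms by $\varepsilon$). Because $\frac{1}{n}\sum_{k=1}^{n-1}s_{k}=\frac{n-1}{n}\cdot\frac{1}{n-1}\sum_{k=1}^{n-1}s_{k}$, the same limit $s$ is obtained, so letting $n\rightarrow\infty$ in the displayed identity gives $\frac{1}{n}\sum_{k=1}^{n}kx_{k}\rightarrow s-s=0$.

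There is essentially no obstacle here: the argument uses neither positivity of the $x_{n}$ nor any monotonicity hypothesis, only Abel's formula together with the Cesàro theorem. The sole point deserving (entirely routine) care is checking that the Cesàro mean convergence survives the passage from $\mathbb{R}$ to a general Banach space, which it does since the standard proof is purely metric.
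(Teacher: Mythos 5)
Your proof is correct and follows essentially the same route as the paper: the identity $\frac{1}{n}\sum_{k=1}^{n}kx_{k}=s_{n}-\frac{1}{n}\sum_{k=1}^{n-1}s_{k}$ combined with Ces\`{a}ro's theorem (valid in any Banach space). You merely make explicit the Abel-summation derivation of the identity and the Banach-space validity of the Ces\`{a}ro step, both of which the paper leaves implicit.
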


\begin{proof}
Indeed, by\ denoting $S_{n}=\sum_{k=1}^{n}x_{k}$ for $n=1,2,3,...,$ the
sequence $(S_{n})_{n}$ is convergent, say to $S.$ According to Ces\`{a}ro's
theorem,%
\[
\lim_{n\rightarrow\infty}\frac{S_{1}+\cdots+S_{n-1}}{n}=S,
\]
whence%
\[
\lim_{n\rightarrow\infty}\frac{a_{1}+2a_{2}+\cdots+na_{n}}{n}=\lim
_{n\rightarrow\infty}\left(  S_{n}-\frac{S_{1}+\cdots+S_{n-1}}{n}\right)  =0.
\]

\end{proof}

If $\sum x_{n}$ is a convergent series of positive elements in a Banach
lattice $E,$ then for every choice of the signs $\pm$ the series $\sum\pm
x_{n}$ is also convergent. Therefore, for every continuous linear functional
$x^{\prime}\in E^{\prime}$ we have%
\[
\lim_{n\rightarrow\infty}\frac{1}{n}\sum_{k=1}^{n}k\left\vert x^{\prime
}\left(  x_{k}\right)  \right\vert =0,
\]
that is, the sequence $(nx_{n})_{n}$ is weakly mixing to 0. See Zsid\'{o}
\cite{Zs2006} for a theory of these sequences.

Suppose now that $E$ is an ordered Banach space with a strong order unit $u>0$
and the norm of $E$ is associated to the strong order unit. This means that%
\[
E=\bigcup\limits_{n=1}^{\infty}[-nu,nu]
\]
and%
\[
\left\Vert x\right\Vert =\inf\left\{  \lambda>0:x\in\lbrack-\lambda u,\lambda
u]\right\}  .
\]
Examples of such spaces are $C(K)$, $L^{\infty}(\mu),$ $c,$ $\ell^{\infty},$
$\mathcal{A}(H)$ etc. For them one can reformulate the conclusion of Lemma
\ref{lemces} in terms of convergence in density.

\begin{definition}
A sequence $(x_{n})_{n}$ of elements belonging to a Banach space $E$
\emph{converges in density} to $x\in E$ $($abbreviated, $(d)$-$\lim
\limits_{n\rightarrow\infty}x_{n}=x)$ if for every $\varepsilon>0$ the set
$A(\varepsilon)=\left\{  n:\left\Vert x_{n}-x\right\Vert \geq\varepsilon
\right\}  $ has zero density, that is,%
\[
\underset{n\rightarrow\infty}{\lim}\frac{\left\vert A(\varepsilon
)\cap\{1,\ldots,n\}\right\vert }{n}=0.
\]

\end{definition}

Here $\left\vert \cdot\right\vert $ stands for cardinality.

Introduced by Koopman and von Neumann in \cite{KN1932}, this concept proved
useful in ergodic theory and its applications. See the monograph of
Furstenberg \cite{F1981}.

The following result provides a discrete analogue of Koopman-von Neumann's
characterization of convergence in density within the framework of ordered
Banach spaces.

\begin{theorem}
\label{ThmKvN}Suppose that $E$ is an ordered Banach space whose norm is
associated to a strong order unit $u>0$. Then for every sequence $(x_{n})_{n}$
of positive elements of $E,$%
\[
\lim_{n\rightarrow\infty}\frac{1}{n}\sum_{k=1}^{n}x_{k}=0\Rightarrow
(d)\text{-}\lim_{n\rightarrow\infty}x_{n}=0.
\]
The converse works under additional hypotheses, for example, for bounded sequences.
\end{theorem}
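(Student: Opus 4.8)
The plan is to reduce the implication to the classical scalar Koopman--von Neumann fact: if $(a_{k})_{k}$ is a sequence of nonnegative reals with $\frac{1}{n}\sum_{k=1}^{n}a_{k}\to0$, then it converges in density to $0$. The scalar statement is immediate from Chebyshev's inequality, since $\varepsilon\,\bigl|\{k\le n:a_{k}\ge\varepsilon\}\bigr|\le\sum_{k=1}^{n}a_{k}$, so that the density of $\{k:a_{k}\ge\varepsilon\}$ is dominated by the Ces\`{a}ro means; notably, no boundedness is needed for this direction. So the real content is to transport this one-dimensional counting estimate into $E$.

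First I would unpack the hypothesis order-theoretically: since the norm of $E$ is the order-unit norm attached to $u$ and the partial sums are positive, $\frac{1}{n}\sum_{k=1}^{n}x_{k}\to0$ means precisely that for every $\eta>0$ there is an $N$ with $0\le\sum_{k=1}^{n}x_{k}\le\eta n\,u$ whenever $n\ge N$. Then I would pass to the dual picture: for a positive $x$ one has $\|x\|=\max\{\varphi(x):\varphi\in S\}$, where $S=\{\varphi\in E':\varphi\ge0,\ \varphi(u)=1\}$ is the weak-$*$ compact convex set of states and the maximum is attained because $\varphi\mapsto\varphi(x)$ is weak-$*$ continuous. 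Evaluating the displayed order inequality against a fixed $\varphi\in S$ yields the scalar bound $\sum_{k=1}^{n}\varphi(x_{k})\le\eta n$, so by the Chebyshev step every set $\{k:\varphi(x_{k})\ge\varepsilon\}$ has density zero. Writing $A(\varepsilon)=\{n:\|x_{n}\|\ge\varepsilon\}$, if $E$ possesses a finite norming family of states $\varphi_{1},\dots,\varphi_{m}$ (as for $\ell^{\infty}_{N}$, or whenever the state space is the convex hull of finitely many extreme points), then $A(\varepsilon)\subseteq\bigcup_{i=1}^{m}\{k:\varphi_{i}(x_{k})\ge\varepsilon\}$ is a finite union of density-zero sets, and the proof is complete.

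The hard part will be the genuinely infinite-dimensional case, in which the state $\varphi_{n}$ realizing $\|x_{n}\|$ depends on $n$ and there is no finite norming family: reducing against a single functional is not enough, and even a reduction against countably many states would not suffice, since a countable union of density-zero sets need not have density zero. The crux is thus to control this dependence on $n$ --- for instance by extracting a weak-$*$ cluster point $\varphi$ of $(\varphi_{n})_{n\in A(\varepsilon)}$ and showing that a subset of $A(\varepsilon)$ of the same upper density is already detected by states in a small weak-$*$ neighbourhood of $\varphi$, thereby reducing to finitely many functionals --- and I expect the whole argument to stand or fall on whether this can be carried out using only that the norm comes from a strong order unit. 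The converse, under the boundedness hypothesis $\sup_{k}\|x_{k}\|\le M$, is short: then $(\|x_{n}\|)_{n}$ is a bounded sequence of nonnegative reals converging in density to $0$, so the bounded scalar Koopman--von Neumann theorem gives $\frac{1}{n}\sum_{k=1}^{n}\|x_{k}\|\to0$, whence $\bigl\|\frac{1}{n}\sum_{k=1}^{n}x_{k}\bigr\|\le\frac{1}{n}\sum_{k=1}^{n}\|x_{k}\|\to0$.
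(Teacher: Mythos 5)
Your converse argument is correct and essentially equivalent to the paper's (the paper splits the Ces\`{a}ro sum using the order bound $x_{n}\leq Cu$ on the complement of a zero-density set; you run the scalar bounded Koopman--von Neumann theorem on $\|x_{n}\|$ --- both are fine). The forward direction, however, is genuinely incomplete: you only close it under the extra assumption that $E$ admits a finite norming family of states, and for the general case you offer a plan (weak-$*$ cluster points of the norming states $\varphi_{n}$) while conceding that the proof stands or falls with it. That gap cannot be filled along the route you set up, because with the norm-level exceptional sets $A(\varepsilon)=\{n:\|x_{n}\|\geq\varepsilon\}$ the implication is simply false in infinite dimensions: in $E=\ell^{\infty}$ with $u=(1,1,1,\dots)$ take $x_{n}=e_{n}$, the $n$-th coordinate vector; then $\bigl\|\frac{1}{n}\sum_{k=1}^{n}x_{k}\bigr\|=\frac{1}{n}\rightarrow0$, yet $\|x_{n}\|=1$ for all $n$, so $A(1/2)=\mathbb{N}$ has density one. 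No refinement of the state-space reduction can overcome this; your worry about the $n$-dependence of the norming functionals is exactly where the statement, read through norms, breaks.

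The paper's proof sidesteps the difficulty by measuring the exceptional indices through the order rather than the norm: it sets $A_{\varepsilon}=\{n: x_{n}\geq\varepsilon u\}$, observes that positivity gives $\left\vert\{1,\dots,n\}\cap A_{\varepsilon}\right\vert\,u\leq\frac{1}{\varepsilon}\sum_{k=1}^{n}x_{k}$, and then takes norms, so the relative density of $A_{\varepsilon}$ is dominated by $\frac{1}{\varepsilon}\bigl\|\frac{1}{n}\sum_{k=1}^{n}x_{k}\bigr\|\rightarrow0$. In other words, the ``Chebyshev step'' you perform after dualizing should instead be performed inside $E$ itself against the order unit $u$; that one-line estimate needs no duality and no finiteness assumption. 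Note, though, that this argument establishes zero density of the sets $\{n:x_{n}\geq\varepsilon u\}$, which for positive elements is strictly weaker than $\|x_{n}\|\geq\varepsilon$ (only $x_{n}\geq\varepsilon u$ forces $\|x_{n}\|\geq\varepsilon$, not conversely); your $\ell^{\infty}$-type example above shows this weakening is unavoidable, i.e.\ the forward implication holds only when the density convergence is understood through these order-level sets, which is what the paper's proof actually delivers.
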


\begin{proof}
Assuming $\lim\limits_{n\rightarrow\infty}\frac{1}{n}\sum_{k=1}^{n}x_{k}=0,$
we associate to each $\varepsilon>0$ the set $A_{\varepsilon}=\left\{
n\in\mathbb{N}:x_{n}\geq\varepsilon u\right\}  .$ Since%
\begin{align*}
0  &  \leq\frac{\left\vert \{1,...,n\}\cap A_{\varepsilon}\right\vert }%
{n}u\leq\frac{1}{n}\sum_{k=1}^{n}\frac{x_{k}}{\varepsilon}\\
&  \leq\frac{1}{\varepsilon n}\sum_{k=1}^{n}x_{k}\rightarrow0\text{\ as
}n\rightarrow\infty,
\end{align*}
we infer that each of the sets $A_{\varepsilon}$ has zero density. Therefore
$(d)$-$\lim_{n\rightarrow\infty}x_{n}=0.$

Suppose now that $(x_{n})_{n}$ is a bounded sequence and $(d)$-$\lim
\limits_{n\rightarrow\infty}x_{n}=0.$ Since boundedness in norm is equivalent
to boundedness in order, there is a number $C>0$ such that $x_{n}\leq Cu$ for
all $n.$ Then for every $\varepsilon>0$ there is a set $J$ of zero density
outside which $x_{n}<\varepsilon u$ and we have
\begin{align*}
\frac{1}{n}\sum_{k=1}^{n}x_{k}  &  =\frac{1}{n}\sum_{k\in\{1,...,n\}\cap
J}x_{k}+\frac{1}{n}\sum_{k\in\{1,...,n\}\backslash J}x_{k}\\
&  \leq\frac{\left\vert \{1,...,n\}\cap J\right\vert }{n}\cdot Cu+\varepsilon
u
\end{align*}
Since $\lim\limits_{n\rightarrow\infty}\frac{\left\vert \{1,...,n\}\cap
J\right\vert }{n}=0$, we conclude that $\lim\limits_{n\rightarrow\infty}%
\frac{1}{n}\sum_{k=1}^{n}x_{k}=0.$
\end{proof}

\begin{corollary}
\label{corOlvgen}If $\sum x_{n}$ is a convergent series of positive elements
in an ordered Banach space $E$ whose norm is associated to a strong order
unit, then
\[
(d)\text{-}\lim_{n\rightarrow\infty}nx_{n}=0.
\]

\end{corollary}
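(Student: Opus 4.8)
The plan is to combine the two most recent results in the excerpt: Lemma \ref{lemces}, which says $\frac{1}{n}\sum_{k=1}^{n}kx_{k}\to 0$ for a convergent positive series $\sum x_{n}$, and Theorem \ref{ThmKvN}, which converts a Ces\`{a}ro-type limit $\frac{1}{n}\sum_{k=1}^{n}z_{k}\to 0$ into convergence in density $(d)$-$\lim z_{n}=0$ whenever $(z_{n})_{n}$ is a sequence of positive elements in an ordered Banach space whose norm is associated to a strong order unit. The natural choice is $z_{n}=nx_{n}$.

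First I would verify the hypotheses of Theorem \ref{ThmKvN} with $z_{n}=nx_{n}$. Since each $x_{n}\geq 0$ and $n\geq 0$, each $z_{n}=nx_{n}$ is a positive element of $E$, so the positivity requirement is met. The space $E$ is, by assumption, an ordered Banach space whose norm is associated to a strong order unit, so the other standing hypothesis of Theorem \ref{ThmKvN} holds verbatim. Next I would invoke Lemma \ref{lemces} applied to the convergent positive series $\sum x_{n}$ to get
\[
\lim_{n\rightarrow\infty}\frac{1}{n}\sum_{k=1}^{n}z_{k}=\lim_{n\rightarrow\infty}\frac{1}{n}\sum_{k=1}^{n}kx_{k}=0.
\]
This is precisely the hypothesis on the left-hand side of the implication in Theorem \ref{ThmKvN}, so the theorem yields $(d)$-$\lim_{n\rightarrow\infty}z_{n}=0$, that is, $(d)$-$\lim_{n\rightarrow\infty}nx_{n}=0$, which is the desired conclusion.

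Honestly, there is no real obstacle here: the corollary is an immediate conjunction of two results already proved. The only point requiring a moment's care is the bookkeeping in Lemma \ref{lemces} — namely that the index shift in Ces\`{a}ro's theorem is harmless, i.e.\ $\frac{S_{1}+\cdots+S_{n-1}}{n}\to S$ just as $\frac{S_{1}+\cdots+S_{n}}{n}\to S$ — but that is already handled inside the proof of that lemma, so I may simply cite it. Thus the write-up would be a two-line proof: apply Lemma \ref{lemces} to obtain the Ces\`{a}ro limit for $(nx_{n})_{n}$, then apply the implication of Theorem \ref{ThmKvN} to this sequence of positive elements.
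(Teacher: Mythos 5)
Your argument is correct and is exactly the derivation the paper intends: the corollary follows by applying Lemma \ref{lemces} to get $\frac{1}{n}\sum_{k=1}^{n}kx_{k}\rightarrow 0$ and then feeding the positive sequence $(nx_{n})_{n}$ into the implication of Theorem \ref{ThmKvN} (the paper states the corollary without proof precisely because it is this immediate combination). No gaps; your check that $nx_{n}\geq 0$ and that the strong-order-unit hypothesis transfers verbatim is all that is needed.
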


Simple numerical examples show that the conclusion of Corollary
\ref{corOlvgen} cannot be improved.

\section{A connection with Jensen-Steffensen inequality}

From the bilinear form of Abel's partial summation formula (see $(\Phi A1)$
and $(\Phi A3)$ above) we infer the following result that offers instances
where the sum of non necessarily positive elements is yet nonnegative.

\begin{theorem}
\label{corAbel}Suppose that $E,$ $F$ and $G$ are ordered vector spaces and
$\Phi:E\times F\rightarrow G$ is a positive bilinear map. If $x_{1}%
,x_{2},...,x_{n}\in E$ and $y_{1},y_{2},...,y_{n}\in F$ verify one of the
following two conditions
\begin{align*}
(i)\text{ }x_{1}  &  \geq x_{2}\geq\cdots\geq x_{n}\geq0\text{ and }\sum
_{k=1}^{j}y_{k}\geq0\text{ for all }j\in\{1,2,...,n\},\text{ }\\
(ii)\text{ }0  &  \leq x_{1}\leq x_{2}\leq\cdots\leq x_{n}\text{ and }%
\sum_{k=j}^{n}y_{k}\geq0\text{ for all }j\in\{1,2,...,n\},
\end{align*}
then%
\[
\sum_{k=1}^{n}\Phi\left(  x_{k},y_{k}\right)  \geq0.
\]

\end{theorem}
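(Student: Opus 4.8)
The plan is to deduce Theorem \ref{corAbel} directly from the bilinear Abel identities $(\Phi A1)$ and $(\Phi A3)$ by checking that, under either hypothesis, every summand on the right-hand side of the relevant identity is a value of $\Phi$ on a pair of positive elements, hence nonnegative by positivity of $\Phi$.

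For case $(i)$, I would apply formula $(\Phi A1)$, namely
\[
\sum_{k=1}^{n}\Phi(x_{k},y_{k})=\sum_{k=1}^{n-1}\Phi\Bigl(x_{k}-x_{k+1},\sum_{j=1}^{k}y_{j}\Bigr)+\Phi\Bigl(x_{n},\sum_{j=1}^{n}y_{j}\Bigr).
\]
Here the monotonicity $x_{1}\geq x_{2}\geq\cdots\geq x_{n}\geq 0$ gives $x_{k}-x_{k+1}\geq 0$ for $k=1,\dots,n-1$ and also $x_{n}\geq 0$; the partial-sum hypothesis gives $\sum_{j=1}^{k}y_{j}\geq 0$ for every $k$. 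Thus each term $\Phi(x_{k}-x_{k+1},\sum_{j=1}^{k}y_{j})$ is nonnegative and so is the boundary term $\Phi(x_{n},\sum_{j=1}^{n}y_{j})$, whence the total sum is $\geq 0$.

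For case $(ii)$, I would instead use formula $(\Phi A3)$,
\[
\sum_{k=1}^{n}\Phi(x_{k},y_{k})=\sum_{k=2}^{n}\Phi\Bigl(x_{k}-x_{k-1},\sum_{j=k}^{n}y_{j}\Bigr)+\Phi\Bigl(x_{1},\sum_{j=1}^{n}y_{j}\Bigr).
\]
Now $0\leq x_{1}\leq x_{2}\leq\cdots\leq x_{n}$ yields $x_{k}-x_{k-1}\geq 0$ for $k=2,\dots,n$ and $x_{1}\geq 0$; the hypothesis $\sum_{j=k}^{n}y_{j}\geq 0$ covers both the summation terms (for $k=2,\dots,n$) and, taking $k=1$, the boundary term $\sum_{j=1}^{n}y_{j}\geq 0$. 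Again every piece is a $\Phi$-value of two positive elements, so the sum is nonnegative. Since the argument is essentially a term-by-term sign check against the two Abel expansions, there is no real obstacle here; the only thing to be careful about is choosing the right identity for each case so that the telescoping differences land in the positive cone and the partial-sum condition matches the direction of summation (forward sums $\sum_{j=1}^{k}$ for $(i)$, backward sums $\sum_{j=k}^{n}$ for $(ii)$).
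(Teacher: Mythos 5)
Your proposal is correct and matches the paper's intended argument exactly: the paper derives this theorem directly from the identities $(\Phi A1)$ and $(\Phi A3)$ together with positivity of $\Phi$, which is precisely your term-by-term sign check in each case.
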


The alert reader will recognize here the framework of another important result
in real analysis, the Steffensen extension of Jensen's inequality:

\begin{theorem}
\label{thmstf}$($\emph{Steffensen \cite{St1919}}$)$ Suppose that
$x_{1},...,x_{n}$ is a monotonic family of points in an interval $[a,b]$ and
$w_{1},...,w_{n}$ are real weights such that
\begin{equation}
\sum_{k\,=\,1}^{n}\,w_{k}=1\quad\text{and}\quad0\leq\sum_{k\,=\,1}^{m}%
\,w_{k}\leq\sum_{k\,=\,1}^{n}\,w_{k}\quad\text{for every }m\in\{1,...,n\}.
\tag{dSt}%
\end{equation}
Then every convex function $f$ defined on $[a,b]$ verifies the inequality%
\begin{equation}
f\left(  \sum_{k=1}^{n}w_{k}x_{k}\right)  \leq\sum_{k=1}^{n}w_{k}f(x_{k}).
\tag{$JSt$}\label{JSt}%
\end{equation}

\end{theorem}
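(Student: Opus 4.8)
The plan is to derive the inequality $(\ref{JSt})$ from the nonnegativity statement of Theorem \ref{corAbel}, applied to the positive bilinear map $\Phi:\mathbb{R}\times\mathbb{R}\to\mathbb{R}$, $\Phi(s,t)=st$. Since replacing each $x_k$ by $-x_k$ reverses the monotonicity and affects neither the conditions on the weights nor $(\ref{JSt})$, it suffices to treat the increasing case $x_1\le x_2\le\cdots\le x_n$. Write $W_m=\sum_{k=1}^m w_k$; the hypothesis then reads $0\le W_m\le W_n=1$ for every $m$, and consequently the tail sums $\sum_{k=m}^n w_k=1-W_{m-1}$ are nonnegative as well.

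First I would locate the barycenter $\bar x=\sum_{k=1}^n w_k x_k$. Abel's formula $(\ref{Aup})$ with $a_k=x_k$ and $b_k=w_k$ gives $\bar x=x_n-\sum_{k=1}^{n-1}W_k(x_{k+1}-x_k)$, and since $0\le W_k\le 1$ while $x_{k+1}-x_k\ge 0$, the subtracted sum lies between $0$ and $x_n-x_1$; hence $\bar x\in[x_1,x_n]\subseteq[a,b]$, so that $f(\bar x)$ is defined. Now take a supporting line of the convex function $f$ at $\bar x$: a real number $\lambda$, lying between the one-sided derivatives $f'_-(\bar x)$ and $f'_+(\bar x)$, with $f(t)\ge f(\bar x)+\lambda(t-\bar x)$ for all $t\in[a,b]$. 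Set $g(t)=f(t)-f(\bar x)-\lambda(t-\bar x)$. Then $g$ is convex and nonnegative on $[a,b]$, $g(\bar x)=0$, $g$ is nonincreasing on $[a,\bar x]$ and nondecreasing on $[\bar x,b]$. Because $\sum_{k=1}^n w_k g(x_k)=\sum_{k=1}^n w_k f(x_k)-f(\bar x)$, proving $(\ref{JSt})$ is the same as proving $\sum_{k=1}^n w_k g(x_k)\ge 0$.

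To do this, pick an index $p$ with $x_p\le\bar x\le x_{p+1}$ (allowing $p=0$ or $p=n$ in the extreme cases) and split $\sum_{k=1}^n w_k g(x_k)=\sum_{k=1}^p w_k g(x_k)+\sum_{k=p+1}^n w_k g(x_k)$. On the first block, $g(x_1)\ge g(x_2)\ge\cdots\ge g(x_p)\ge 0$ because $g$ is nonincreasing on $[a,\bar x]$ and $x_1\le\cdots\le x_p\le\bar x$, while the partial sums $W_1,\dots,W_p$ of the weights are nonnegative; so condition $(i)$ of Theorem \ref{corAbel} gives $\sum_{k=1}^p w_k g(x_k)\ge 0$. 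On the second block, $0\le g(x_{p+1})\le g(x_{p+2})\le\cdots\le g(x_n)$ because $g$ is nondecreasing on $[\bar x,b]$, while the tail sums $\sum_{k=j}^n w_k=1-W_{j-1}$ are nonnegative; so, after relabelling the indices to start from $1$, condition $(ii)$ of Theorem \ref{corAbel} gives $\sum_{k=p+1}^n w_k g(x_k)\ge 0$. Adding the two inequalities proves $(\ref{JSt})$, and the decreasing case follows by the reflection noted at the outset.

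The substance of the argument is the single step in which the two halves of the index set are matched with the two alternatives $(i)$ and $(ii)$ of Theorem \ref{corAbel}; the only preliminary needing a word of care is the classical existence of a supporting line of $f$ at $\bar x$, together with the resulting monotonicity of $g$ on either side of $\bar x$. The boundary situations in which $\bar x$ coincides with an endpoint of $[a,b]$ force $(\ref{JSt})$ to become an equality and may be checked directly.
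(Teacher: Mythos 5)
Your proof is correct, but it takes a genuinely different route from the paper's. The paper reduces (\ref{JSt}) by approximation to piecewise linear convex functions and then, via the Hardy--Littlewood--P\'olya representation (Theorem \ref{thmpop}), to the single critical case of translates of the absolute value, which it settles by applying Theorem \ref{corAbel} to the positive and negative parts $x_k^{+}$, $x_k^{-}$ of the (translated) points. You instead treat an arbitrary convex $f$ directly: a supporting line at the barycenter $\bar x$ makes $g(t)=f(t)-f(\bar x)-\lambda(t-\bar x)$ nonnegative with minimum $0$ at $\bar x$, hence monotone on either side of $\bar x$, and splitting the sum at the index $p$ with $x_p\le\bar x\le x_{p+1}$ matches the two blocks exactly with conditions $(i)$ and $(ii)$ of Theorem \ref{corAbel}, the weight hypothesis supplying the required nonnegative partial and tail sums. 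This buys a proof with no approximation step and no structure theorem; the price is the boundary case $\bar x\in\{a,b\}$, where a supporting line need not exist. Your claim that this case forces equality is true but deserves a line of argument: for $\bar x=a$ (in the increasing case), formula (\ref{Adown}) gives $\bar x-x_1=\sum_{k=1}^{n-1}(1-W_k)(x_{k+1}-x_k)\ge 0$, so $x_1=a$ and every term vanishes, i.e.\ for each $k$ either $x_{k+1}=x_k$ or $W_k=1$; grouping consecutive equal $x$'s then shows that every block after the first carries total weight $0$, whence $\sum_{k=1}^n w_kf(x_k)=f(a)=f(\bar x)$, and the case $\bar x=b$ is symmetric via (\ref{Aup}). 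Conversely, what the paper's reduction to absolute-value building blocks buys is precisely the transfer of the argument to the cone $Cv_0([u,v],E)$ in the Banach lattice generalization that follows Theorem \ref{thmstf}; your supporting-line argument does not extend to that vector-valued setting as directly.
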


The proof of Theorem \ref{thmstf} can be easily reduced to the case of
continuous convex functions and next (via an approximation argument) to the
case of piecewise linear convex functions. Taking into account the following
result that describes the structure of piecewise linear convex functions, the
proof of Theorem \ref{thmstf} reduces ultimately to the case of absolute value function.

\begin{theorem}
\label{thmpop}\emph{(Hardy, Littlewood and P\'{o}lya \cite{HLP})} Let
$f\colon\lbrack a,b]\rightarrow\mathbb{R}$ be a piecewise linear convex
function. Then $f$ is the sum of an affine function and a linear combination,
with positive coefficients, of translates of the absolute value function. In
other words, $f$ is of the form
\[
f(x)=\alpha x+\beta+\sum_{k=1}^{n}c_{k}|x-x_{k}|
\]
for suitable $\alpha,\beta,x_{1},...,x_{n}\in\mathbb{R}$ and suitable
nonnegative coefficients $c_{1},\dots,c_{n}$.
\end{theorem}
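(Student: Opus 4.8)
The plan is to argue by induction on the number of "breakpoints" of $f$, that is, the points where the slope changes. A piecewise linear function on $[a,b]$ has finitely many such breakpoints $x_1<\dots<x_n$ in the open interval $(a,b)$; on each of the $n+1$ subintervals determined by them the function is affine, and convexity forces the successive slopes $m_0<m_1<\dots<m_n$ to be strictly increasing (at least nondecreasing; we may discard breakpoints where the slope does not actually change). The base case $n=0$ is trivial: $f$ is itself affine, so we take $\alpha x+\beta=f(x)$ and an empty sum.

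For the inductive step, the key observation is that a single translate $|x-x_k|$ contributes slope $-1$ to the left of $x_k$ and slope $+1$ to the right, hence adding $c_k|x-x_k|$ with $c_k>0$ increases the slope by $2c_k$ exactly at the breakpoint $x_k$ and nowhere else. So I would set $c_k=(m_k-m_{k-1})/2>0$ for $k=1,\dots,n$ and consider
\[
g(x)=f(x)-\sum_{k=1}^{n}c_k|x-x_k|.
\]
A direct computation of slopes on each subinterval shows that $g$ has constant slope throughout $[a,b]$: on the leftmost piece its slope is $m_0+\sum_{k=1}^n c_k$, and crossing each $x_k$ the slope of $f$ jumps up by $2c_k=m_k-m_{k-1}$ while the slope of the subtracted sum jumps up by exactly the same amount, so $g$'s slope is unchanged. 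Being continuous and piecewise linear with a single slope, $g$ is affine, say $g(x)=\alpha x+\beta$, and this yields the claimed representation.

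The only genuinely delicate point is the bookkeeping of slopes across the breakpoints — making sure that the jump of $\sum c_k|x-x_k|$ at $x_k$ is precisely $2c_k$ and that contributions from the other translates $|x-x_j|$, $j\neq k$, are locally affine near $x_k$ and hence do not affect the jump there. This is routine but must be done carefully; alternatively one can phrase it via one-sided derivatives: $f'_+(x)-f'_-(x)\ge 0$ at every point, is positive only at the $x_k$, and $\bigl(\sum c_k|x-x_k|\bigr)'_+ - \bigl(\sum c_k|x-x_k|\bigr)'_- = 2c_k$ at $x_k$, so choosing $c_k$ to match kills all the kinks and leaves an affine remainder. Either way, the essential content is just that the "kink of size $s$ at the point $p$" is produced by the single building block $\tfrac{s}{2}|x-p|$, and these building blocks superpose linearly.
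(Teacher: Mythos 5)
Your proposal is correct, and it is essentially the standard argument: identify the breakpoints, note that $c_k|x-x_k|$ produces a slope jump of exactly $2c_k$ at $x_k$ and is affine elsewhere, choose $c_k=(m_k-m_{k-1})/2\geq 0$ to cancel every kink of $f$, and conclude that the remainder $g$ is a continuous piecewise linear function with a single slope, hence affine. The paper itself gives no proof of this theorem --- it only states it and refers to \cite{Pop1965} and \cite{NP2006}, pp.~34--35, for ``simple proofs'' --- and your kink-cancellation computation (equivalently, the one-sided derivative formulation $f'_+(x_k)-f'_-(x_k)=2c_k$) is precisely the kind of elementary argument those references supply, so there is nothing to object to beyond writing out the routine slope bookkeeping you already sketch.
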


Simple proofs are available in \cite{Pop1965} and \cite{NP2006}, pp. 34-35.

\begin{proof}
(of Theorem \ref{thmstf}) We already noticed that the critical case is that of
the absolute value function. This can be settled as follows. $/$Assuming the
ordering $x_{1}\leq\cdots\leq x_{n}$ (to make a choice), we infer that%
\[
0\leq x_{1}^{+}\leq\cdots\leq x_{n}^{+}%
\]
and
\[
x_{1}^{-}\geq\cdots\geq x_{n}^{-}\geq0
\]
where $z^{+}=\max\left\{  z,0\right\}  $ and $z^{-}=\max\left\{  -z,0\right\}
$ denotes respectively the positive part and the negative part of any element
$z.$ According to Theorem \ref{corAbel} (applied to the bilinear map
$B(w,x)=wx)$ we have
\[
\sum_{k=1}^{n}w_{k}x_{k}^{+}\geq0\text{ and }\sum_{k=1}^{n}w_{k}x_{k}^{-}%
\geq0
\]
equivalently,%
\[
\left\vert \sum_{k=1}^{n}w_{k}x_{k}\right\vert \leq\sum_{k=1}^{n}%
w_{k}\left\vert x_{k}\right\vert .
\]
and the proof is done.
\end{proof}

As was noticed in \cite{NP2006}, Exercise 3, p. 184, Theorem \ref{thmpop} does
not extend to higher dimensions. However, there is a nontrivial class of
convex functions for which Steffensen's inequality still works. Given an order
interval $[u,v]$ of a Banach lattice $E,$ let us denote by $Cv_{0}([u,v],E)$
the closure (in the point-wise convergence topology) of the convex cone
consisting of all functions $f:[u,v]\rightarrow E$ of the form
\[
f(x)=A(x)+\sum_{k=1}^{n}c_{k}|x-x_{k}|
\]
for some affine function $A:E\rightarrow E,$ some elements $x_{1},...,x_{n}%
\in\lbrack u,v]$ and some positive coefficients $c_{1},...,c_{n}.$ The
functions belonging to $Cv_{0}([u,v],E)$ verify the condition of convexity
\[
f\left(  \left(  1-\lambda\right)  x+\lambda y\right)  \leq\left(
1-\lambda\right)  f(x)+\lambda f(y)
\]
for all $x,y\in E$ and $\lambda\in\lbrack0,1]$ (the inequality taking place in
the ordering of $E$). An inspection of the argument of Theorem \ref{thmstf}
easily shows that this result still works for functions belonging to
$Cv_{0}([u,v],E):$

\begin{theorem}
$($\emph{The generalization of} \emph{Jensen-Steffensen Inequality) } Suppose
that $E$ is a Banach lattice, $x_{1},...,x_{n}$ is a monotonic family of
points in an order interval $[u,v]$ of $E$ and $w_{1},...,w_{n}$ is a family
of real weights. Then every function $f$ belonging to $Cv_{0}([u,v],E)$
verifies the inequality%
\[
f\left(  \sum_{k=1}^{n}w_{k}x_{k}\right)  \leq\sum_{k=1}^{n}w_{k}f(x_{k}).
\]

\end{theorem}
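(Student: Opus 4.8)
The plan is to imitate, step by step, the three-stage argument behind Theorem~\ref{thmstf}: reduce to a generator of the cone $Cv_0([u,v],E)$, discard the affine part, and then handle a single translate of the modulus by invoking Theorem~\ref{corAbel} with the positive bilinear map $B\colon E\times\mathbb{R}\to E$, $B(z,w)=wz$. One preliminary point has to be cleared up first, however: for the left-hand side even to be meaningful one needs $\bar x:=\sum_{k=1}^{n}w_k x_k$ to lie in $[u,v]$, so I would start there. (As in Theorem~\ref{thmstf}, the weights are assumed to satisfy $\sum_k w_k=1$ together with $0\le\sum_{k=1}^{m}w_k\le 1$ for every $m$.)

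To handle this preliminary point, assume the ordering $x_1\le\cdots\le x_n$ (the decreasing case being symmetric). The Abel summation formula \eqref{Adown}, applied with $a_k=x_k$ and $b_k=w_k$, gives
\[
\bar x=x_1+\sum_{k=1}^{n-1}(x_{k+1}-x_k)\Bigl(\sum_{j=k+1}^{n}w_j\Bigr)\ge x_1\ge u,
\]
since every increment $x_{k+1}-x_k$ is $\ge 0$ while every tail $\sum_{j=k+1}^{n}w_j=1-\sum_{j=1}^{k}w_j$ is $\ge 0$. Symmetrically, \eqref{Aup} gives $\bar x=x_n+\sum_{k=1}^{n-1}(x_k-x_{k+1})\bigl(\sum_{j=1}^{k}w_j\bigr)\le x_n\le v$. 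Hence $\bar x\in[u,v]$ and $f(\bar x)$ is defined.

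Next the reduction. Every $f\in Cv_0([u,v],E)$ is a pointwise limit of functions $g(x)=A(x)+\sum_{i=1}^{m}c_i\lvert x-p_i\rvert$ with $A\colon E\to E$ affine, $p_i\in[u,v]$ and $c_i\ge 0$; since the positive cone of a Banach lattice is closed, it is enough to prove $g(\bar x)\le\sum_k w_k g(x_k)$ for each such $g$ and then pass to the limit at the points $\bar x,x_1,\dots,x_n$. Writing $A=L+b$ with $L$ linear and using $\sum_k w_k=1$, one gets $A(\bar x)=\sum_k w_k A(x_k)$, so the affine part drops out and the claim reduces to $\sum_i c_i\lvert\bar x-p_i\rvert\le\sum_i c_i\sum_k w_k\lvert x_k-p_i\rvert$; as $c_i\ge 0$, it suffices to show $\lvert\bar x-p_i\rvert\le\sum_k w_k\lvert x_k-p_i\rvert$ for each $i$. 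Setting $z_k=x_k-p_i$ (again a monotonic family, translation being order-preserving) and observing $\bar x-p_i=\sum_k w_k z_k$, everything comes down to
\[
\Bigl\lvert\sum_{k=1}^{n}w_k z_k\Bigr\rvert\le\sum_{k=1}^{n}w_k\lvert z_k\rvert\qquad\text{for any monotonic }z_1,\dots,z_n\in E.
\]
To prove this, take $z_1\le\cdots\le z_n$; then $0\le z_1^{+}\le\cdots\le z_n^{+}$ and $z_1^{-}\ge\cdots\ge z_n^{-}\ge 0$, because $z\mapsto z^{+}$ is isotone and $z\mapsto z^{-}$ antitone. Theorem~\ref{corAbel} applied to $B(z,w)=wz$ --- under hypothesis $(ii)$, using $\sum_{k=j}^{n}w_k\ge 0$, and under hypothesis $(i)$, using $\sum_{k=1}^{j}w_k\ge 0$ --- yields $\sum_k w_k z_k^{+}\ge 0$ and $\sum_k w_k z_k^{-}\ge 0$. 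Since $\lvert z_k\rvert-z_k=2z_k^{-}$ and $\lvert z_k\rvert+z_k=2z_k^{+}$, these combine to give $-\sum_k w_k\lvert z_k\rvert\le\sum_k w_k z_k\le\sum_k w_k\lvert z_k\rvert$, and in a vector lattice $-a\le b\le a$ forces $\lvert b\rvert\le a$; this is the claimed inequality. Combining it with the two reductions and the limiting step completes the proof.

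The reductions and the modulus estimate are precisely ``an inspection of the argument of Theorem~\ref{thmstf}'' and offer no resistance once Theorem~\ref{corAbel} is available. The only new ingredient --- and the sole place where the order-interval hypothesis and the Steffensen condition on the weights are used beyond the bare lattice structure --- is showing $\bar x\in[u,v]$, i.e.\ that the left-hand side is defined at all; the Abel summation identities \eqref{Aup}--\eqref{Adown} settle this in one line. The passage to the pointwise limit needs nothing but closedness of the positive cone, automatic in any Banach lattice.
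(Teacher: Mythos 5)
Your proposal is correct and follows exactly the route the paper intends when it says the result follows from ``an inspection of the argument of Theorem~\ref{thmstf}'': reduction to generators $A(x)+\sum_i c_i|x-p_i|$, cancellation of the affine part via $\sum_k w_k=1$, and the modulus estimate obtained from Theorem~\ref{corAbel} applied to the positive and negative parts of the translated monotone family, followed by passage to the pointwise closure. You also rightly supply two points the paper's terse statement glosses over --- the Steffensen condition (dSt) on the weights (omitted in the theorem's wording but indispensable) and the verification via Abel summation that $\sum_k w_k x_k$ lies in $[u,v]$ so that the left-hand side is defined.
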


\section{A connection with majorization theory}

The theory of majorization provides a unified approach to the analysis of a
number of models in economics, finance, risk management, genetics etc. and is
masterfully exposed in the book of Marshall, Olkin and Arnold \cite{MOA}.

Given a vector $x\in\mathbb{R}^{N}$ of components $x_{1},...,x_{N},$ let
$x^{\downarrow}$ be the vector with the same entries as $x$ but rearranged in
decreasing order,%
\[
x_{1}^{\downarrow}\geq\cdots\geq x_{N}^{\downarrow}.
\]
The vector $x$ is \emph{submajorized} by another vector $y$ (abbreviated,
$x\mathbf{\prec}_{w}y)$ if%
\[
\sum_{i\,=\,1}^{k}\,x_{i}^{\downarrow}\leq\sum_{i\,=\,1}^{k}\,y_{i}%
^{\downarrow}\quad\text{for }k=1,...,N
\]
and \emph{majorized} (abbreviated, $x\mathbf{\prec}y)$ if in addition%
\[
\sum_{i\,=\,1}^{N}\,x_{i}^{\downarrow}=\sum_{i\,=\,1}^{N}\,y_{i}^{\downarrow
}\,.
\]

The following result outlines a connection between Abel's partial summation
formula and the Tomi\'{c}-Weyl inequality of majorization (\cite{NP2006},
Theorem 1.10.4, p.57):

\begin{theorem}
\label{thmmaj1}Suppose that $\Phi:E\times E\rightarrow G$ is a positive
bilinear map and $x_{1},x_{2},...,x_{n}$ is a decreasing sequence of elements
of $E.$ If $u_{1},u_{2},...,u_{n}$ and $v_{1},v_{2},...,v_{n}$ are two
families of elements of $E$ such that
\[
u_{1}\geq u_{2}\geq\cdots\geq u_{n}\geq0\text{ and }\sum_{k=1}^{j}u_{k}%
\leq\sum_{k=1}^{j}v_{k}\text{ for }j\in\{1,2,...,n\},
\]
then%
\[
\sum_{k=1}^{n}\Phi\left(  x_{k},u_{k}\right)  \leq\sum_{k=1}^{n}\Phi\left(
x_{k},v_{k}\right)  \text{ }%
\]
and%
\[
\sum_{k=1}^{n}\Phi\left(  u_{k},x_{k}\right)  \leq\sum_{k=1}^{n}\Phi\left(
v_{k},x_{k}\right)  \text{.}%
\]

\end{theorem}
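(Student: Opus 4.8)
The plan is to deduce both inequalities from Theorem \ref{corAbel} after a single use of the bilinearity of $\Phi$. For the first inequality, put $y_{k}:=v_{k}-u_{k}$ and write
\[
\sum_{k=1}^{n}\Phi(x_{k},v_{k})-\sum_{k=1}^{n}\Phi(x_{k},u_{k})=\sum_{k=1}^{n}\Phi\left(x_{k},y_{k}\right).
\]
The hypothesis $\sum_{k=1}^{j}u_{k}\leq\sum_{k=1}^{j}v_{k}$ says precisely that $\sum_{k=1}^{j}y_{k}\geq0$ for every $j\in\{1,\dots,n\}$, while $x_{1}\geq x_{2}\geq\cdots\geq x_{n}\geq0$ is a decreasing family of positive elements. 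Hence condition $(i)$ of Theorem \ref{corAbel} applies to $(y_{k})_{k}$ and gives $\sum_{k=1}^{n}\Phi(x_{k},y_{k})\geq0$, which is the first inequality.

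For the second inequality I would repeat the argument with the transposed map $\Psi\colon E\times E\rightarrow G$, $\Psi(a,b):=\Phi(b,a)$, which is again a positive bilinear map. By bilinearity the claim $\sum_{k=1}^{n}\Phi(u_{k},x_{k})\leq\sum_{k=1}^{n}\Phi(v_{k},x_{k})$ is equivalent to $\sum_{k=1}^{n}\Psi(x_{k},y_{k})\geq0$ with the same $y_{k}=v_{k}-u_{k}$, and Theorem \ref{corAbel}$(i)$ applied to $\Psi$ (the decreasing positive family $x_{1}\geq\cdots\geq x_{n}\geq0$ sitting in the first slot, the nonnegative partial sums of $(y_{k})_{k}$ in the second) finishes the proof. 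Equivalently, one may bypass $\Psi$ and apply formula (\ref{B2}) directly to $\sum_{k=1}^{n}\Phi(y_{k},x_{k})$: each term $\Phi\bigl(\sum_{j=1}^{k}y_{j},\,x_{k}-x_{k+1}\bigr)$ is nonnegative, and so is the leftover term $\Phi\bigl(\sum_{j=1}^{n}y_{j},\,x_{n}\bigr)$.

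The proof is short, so rather than a genuine obstacle there are two points that require care. First, condition $(i)$ of Theorem \ref{corAbel} wants the decreasing positive family in its \emph{first} argument, and since $\Phi$ is not assumed symmetric the transposition in the second inequality is genuinely necessary, not cosmetic. Second, the positivity $x_{n}\geq0$ is indispensable: already for $n=1$ the inequality $\Phi(x_{1},u_{1})\leq\Phi(x_{1},v_{1})$ with $0\leq u_{1}\leq v_{1}$ fails when $x_{1}\not\geq0$, so the decreasing sequence $(x_{k})_{k}$ in the statement must be read as a decreasing sequence of positive elements, in agreement with the hypotheses of Theorem \ref{corAbel}. Note also that the monotonicity of $(u_{k})_{k}$ itself plays no explicit role in this reduction; after unwinding Theorem \ref{corAbel} through Abel's formula (\ref{B1}), the whole content is that each telescoped term $\Phi\bigl(x_{k}-x_{k+1},\sum_{j=1}^{k}y_{j}\bigr)$ is nonnegative by positivity of $\Phi$, while the remaining term $\Phi\bigl(x_{n},\sum_{j=1}^{n}y_{j}\bigr)$ is nonnegative exactly because $x_{n}\geq0$ and $\sum_{j=1}^{n}y_{j}\geq0$.
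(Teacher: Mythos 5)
Your proof is correct and is essentially the paper's own argument: the paper applies the bilinear Abel formulas (\ref{B1}) and (\ref{B2}) to $\sum_{k}\Phi(x_{k},u_{k})$ and $\sum_{k}\Phi(u_{k},x_{k})$ and compares termwise, which after subtracting is exactly your application of Theorem \ref{corAbel} to $y_{k}=v_{k}-u_{k}$ (and, for the second inequality, your direct use of (\ref{B2}) or, equivalently, of the transposed map). Your side remark that $x_{n}\geq0$ is indispensable is also accurate: the paper's proof uses it tacitly in the step $\Phi\left(x_{n},\sum_{j=1}^{n}u_{j}\right)\leq\Phi\left(x_{n},\sum_{j=1}^{n}v_{j}\right)$, even though the statement only says ``decreasing.''
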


\begin{proof}
Indeed, according to (\ref{B1}),%
\begin{align*}
\sum_{k=1}^{n}\Phi\left(  x_{k},u_{k}\right)   &  =\sum_{k=1}^{n-1}\Phi\left(
x_{k}-x_{k+1},\sum_{j=1}^{k}u_{j}\right)  +\Phi\left(  x_{n},\sum_{j=1}%
^{n}u_{j}\right) \\
&  \leq\sum_{k=1}^{n-1}\Phi\left(  x_{k}-x_{k+1},\sum_{j=1}^{k}v_{j}\right)
+\Phi\left(  x_{n},\sum_{j=1}^{n}v_{j}\right)  =\sum_{k=1}^{n}\Phi\left(
x_{k},v_{k}\right)  .
\end{align*}

On the other hand from (\ref{B2}) we infer that%
\begin{align*}
\sum_{k=1}^{n}\Phi\left(  u_{k},x_{k}\right)   &  =\sum_{k=1}^{n-1}\Phi\left(
\sum_{j=1}^{k}u_{j},x_{k}-x_{k+1}\right)  +\Phi\left(  \sum_{j=1}^{n}%
u_{j},x_{n}\right) \\
&  \leq\sum_{k=1}^{n-1}\Phi\left(  \sum_{j=1}^{k}v_{j},x_{k}-x_{k+1}\right)
+\Phi\left(  \sum_{j=1}^{n}v_{j},x_{n}\right)  =\sum_{k=1}^{n}\Phi\left(
v_{k},x_{k}\right)  .
\end{align*}

\end{proof}

In the particular case where $E=\mathcal{A}(\mathbb{R}^{N}),$ $G=\mathbb{R}$
and $\Phi(A,B)=\operatorname*{Trace}\left(  AB\right)  $, Theorem
\ref{thmmaj1} yields the inequality%
\[
\sum_{k=1}^{n}\operatorname*{Trace}A_{k}^{2}\leq\sum_{k=1}^{n}%
\operatorname*{Trace}A_{k}B_{k},
\]
provided that the self-adjoint operators $A_{k}$ and $B_{k}$ verify the
conditions%
\[
A_{1}\geq A_{2}\geq\cdots\geq A_{n}\geq0\text{ and }\sum_{k=1}^{j}A_{k}%
\leq\sum_{k=1}^{j}B_{k}\text{ for }j\in\{1,2,...,n\}
\]

Combining this with the Cauchy-Schwarz inequality,%
\[
\left(  \sum_{k=1}^{n}\operatorname*{Trace}A_{k}B_{k}\right)  ^{2}\leq\left(
\sum_{k=1}^{n}\operatorname*{Trace}A_{k}^{2}\right)  \left(  \sum_{k=1}%
^{n}\operatorname*{Trace}B_{k}^{2}\right)  ,
\]
we arrive at the following trace inequality ascribed to K. L. Chung:%
\[
\sum_{k=1}^{n}\operatorname*{Trace}A_{k}^{2}\leq\sum_{k=1}^{n}%
\operatorname*{Trace}B_{k}^{2}.
\]

The function $A\rightarrow\operatorname*{Trace}\left(  f(A)\right)  $ is
convex on $\mathcal{A}(\mathbb{R}^{N})$ whenever $f:\mathbb{R\rightarrow R}$
is a convex function. See \cite{P}, Proposition 2, p. 288. Thus, Chung's
inequality is an illustration of the following trace analogue of
Tomi\'{c}-Weyl inequality of submajorization:

\begin{theorem}
\label{thmmaj2}Let $f:\mathbb{R}\rightarrow\mathbb{R}$ be a nondecreasing
convex function. If $A_{1},A_{2},...,A_{n}$ and $B_{1},B_{2},...,B_{n}$ are
two families of elements of $\mathcal{A}(\mathbb{R}^{N})$ such that
\[
A_{1}\geq A_{2}\geq\cdots\geq A_{n}\geq0\text{ and }\sum_{k=1}^{j}A_{k}%
\leq\sum_{k=1}^{j}B_{k}\text{ for }j\in\{1,2,...,n\},
\]
then%
\[
\sum_{k=1}^{n}\operatorname*{Trace}f\left(  A_{k}\right)  \leq\sum_{k=1}%
^{n}\operatorname*{Trace}f\left(  B_{k}\right)  .
\]

\end{theorem}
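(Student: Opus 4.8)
The plan is to strip off constants and reduce the inequality to the one–parameter family of truncations $g_{t}(x)=(x-t)_{+}:=\max\{x-t,\,0\}$, $t\in\mathbb{R}$. Since the family $A_{1},\dots,A_{n},B_{1},\dots,B_{n}$ is finite, all its spectra lie in a common compact interval $[a,b]$, and on $[a,b]$ every nondecreasing convex $f$ is represented by
\[
f(x)=f(a)+\int_{[a,b]}(x-t)_{+}\,d\nu(t),
\]
where $\nu=f'_{+}(a)\,\delta_{a}+\mu$, $\mu$ is the (finite, positive) second–derivative measure of $f$ on $(a,b]$, and $f'_{+}(a)\geq0$ because $f$ is nondecreasing. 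Substituting this into the functional calculus and using the linearity and continuity of the trace (eigenvalue by eigenvalue) yields
\[
\sum_{k=1}^{n}\operatorname*{Trace}f(A_{k})=nN\,f(a)+\int_{[a,b]}\Big(\sum_{k=1}^{n}\operatorname*{Trace}\big((A_{k}-tI)_{+}\big)\Big)\,d\nu(t),
\]
and likewise with $B$. As $\nu\geq0$ and the constant term is the same on both sides, the theorem will follow once one proves, for every real $t$,
\begin{equation}
\sum_{k=1}^{n}\operatorname*{Trace}\big((A_{k}-tI)_{+}\big)\leq\sum_{k=1}^{n}\operatorname*{Trace}\big((B_{k}-tI)_{+}\big).\tag{$\ast$}
\end{equation}

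For $t\leq0$ this is immediate: then $A_{k}-tI\geq0$, so $(A_{k}-tI)_{+}=A_{k}-tI$ and the left side of $(\ast)$ equals $\operatorname*{Trace}\big(\sum_{k}A_{k}\big)-ntN$, whereas $(B_{k}-tI)_{+}\geq B_{k}-tI$ makes the right side at least $\operatorname*{Trace}\big(\sum_{k}B_{k}\big)-ntN$; thus $(\ast)$ reduces to $\operatorname*{Trace}\big(\sum_{k}A_{k}\big)\leq\operatorname*{Trace}\big(\sum_{k}B_{k}\big)$, i.e. the case $j=n$ of the hypothesis together with monotonicity of the trace. For $t>0$ I would use the variational identity
\[
\operatorname*{Trace}\big((X-tI)_{+}\big)=\max\{\operatorname*{Trace}\big(H(X-tI)\big):\ 0\leq H\leq I\},
\]
attained at the spectral projection of $X$ onto $(t,\infty)$. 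Letting $P_{k}$ be the spectral projection of $A_{k}$ onto $(t,\infty)$, one has $\operatorname*{Trace}\big((A_{k}-tI)_{+}\big)=\operatorname*{Trace}\big(P_{k}(A_{k}-tI)\big)$ exactly and $\operatorname*{Trace}\big((B_{k}-tI)_{+}\big)\geq\operatorname*{Trace}\big(P_{k}(B_{k}-tI)\big)$ by sub-optimality, so $(\ast)$ would follow from
\begin{equation}
\sum_{k=1}^{n}\operatorname*{Trace}\big(P_{k}(B_{k}-A_{k})\big)\geq0 .\tag{$\ast\ast$}
\end{equation}
Since $\sum_{k=1}^{j}(B_{k}-A_{k})\geq0$ for every $j$, the temptation is to get $(\ast\ast)$ from Abel's identity (\ref{B1}), used with the bilinear map $(S,Y)\mapsto\operatorname*{Trace}(SY)$ and the test sequence $P_{1},\dots,P_{n}$, exactly as in the proof of Theorem~\ref{thmmaj1}.

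The hard part is this last step, and I do not see how to carry it out: Abel's formula yields $(\ast\ast)$ only when $P_{1}\geq P_{2}\geq\cdots\geq P_{n}$, whereas $A_{1}\geq A_{2}\geq\cdots\geq A_{n}$ only makes the ranks $\operatorname*{rank}P_{k}$ non-increasing, not the eigenspaces nested; and one cannot substitute a monotone family of comparison operators, because the constraint $\operatorname*{Trace}\big(H_{k}(A_{k}-tI)\big)=\operatorname*{Trace}\big((A_{k}-tI)_{+}\big)$ pins $H_{k}$ down essentially to $P_{k}$. In fact $(\ast)$ — hence the theorem as stated — cannot hold without a further hypothesis: for $N=n=2$ take
\[
A_{1}=\begin{pmatrix}5&0\\0&1\end{pmatrix},\quad A_{2}=\begin{pmatrix}2&\tfrac{1}{2}\\ \tfrac{1}{2}&\tfrac{9}{10}\end{pmatrix},\quad B_{1}=\begin{pmatrix}5&0\\0&2\end{pmatrix},\quad B_{2}=\begin{pmatrix}2&\tfrac{1}{2}\\ \tfrac{1}{2}&-\tfrac{1}{10}\end{pmatrix};
\]
then $A_{1}\geq A_{2}\geq0$, $B_{1}\geq A_{1}$ and $B_{1}+B_{2}=A_{1}+A_{2}$, yet for $t=2$ the left side of $(\ast)$ equals $3+\tfrac{\sqrt{221}-11}{20}\approx3.193$ while the right side equals $3+\tfrac{\sqrt{541}-21}{20}\approx3.113$.

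So the scheme does not close unaided, and I expect the genuine content to be the following: the reduction of the first paragraph is the robust core, and one then needs an extra hypothesis forcing the projections $P_{1},\dots,P_{n}$ to be arrangeable in decreasing order (so that (\ref{B1}) applies to $(\ast\ast)$) — for instance, mutual commutativity of the whole family $\{A_{k},B_{k}\}$, in which case one diagonalizes simultaneously and reduces $(\ast)$ coordinatewise to the trivial scalar estimate $\sum_{k}(a_{k}-t)_{+}=\sum_{k\leq p}a_{k}-pt\leq\sum_{k\leq p}b_{k}-pt\leq\sum_{k}(b_{k}-t)_{+}$ with $p=\#\{k:a_{k}>t\}$. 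Absent such a condition the statement should be amended.
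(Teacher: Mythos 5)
Your reduction to the truncations $(x-t)_{+}$ is sound, and your final verdict is, as far as I can check, correct: the statement as printed is false, and your matrices really do refute it. Indeed, $A_{1}-A_{2}$ has trace $31/10$ and determinant $1/20$, so $A_{1}\geq A_{2}$, while $A_{2}$ has trace $29/10$ and determinant $31/20$, so $A_{2}\geq 0$; moreover $B_{1}-A_{1}=\operatorname{diag}(0,1)\geq 0$ and $B_{1}+B_{2}=A_{1}+A_{2}$, so both partial-sum hypotheses hold. Yet for the nondecreasing convex function $f(x)=(x-2)_{+}$ one gets $\sum_{k}\operatorname{Trace}f(A_{k})=3+\tfrac{\sqrt{221}-11}{20}\approx 3.193$ against $\sum_{k}\operatorname{Trace}f(B_{k})=3+\tfrac{\sqrt{541}-21}{20}\approx 3.113$, and since the violation is by about $0.08$, replacing $f$ by a smooth increasing convex approximation of the hockey-stick shows the failure is not an artifact of nondifferentiability. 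So what you found is not a gap in your own scheme but a counterexample to Theorem \ref{thmmaj2} as stated (and to the intermediate inequality $(\ast)$ you isolated).

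For comparison, the paper's proof is quite different from your truncation scheme: for $f\in C^{1}$ it uses convexity of $A\mapsto\operatorname{Trace}f(A)$ to get the gradient inequality $\operatorname{Trace}\left[f^{\prime}(A_{k})(B_{k}-A_{k})\right]\leq\operatorname{Trace}f(B_{k})-\operatorname{Trace}f(A_{k})$, then applies Abel's formula (\ref{B1}) to $\sum_{k}\operatorname{Trace}\left[f^{\prime}(A_{k})(B_{k}-A_{k})\right]$, producing the terms $\operatorname{Trace}\bigl[(f^{\prime}(A_{m})-f^{\prime}(A_{m+1}))\sum_{k\leq m}(B_{k}-A_{k})\bigr]$ together with $\operatorname{Trace}\bigl[f^{\prime}(A_{n})\sum_{k\leq n}(B_{k}-A_{k})\bigr]$, and declares each nonnegative by the monotonicity of $U\mapsto\operatorname{Trace}h(U)$ for increasing $h$. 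That cited fact concerns the \emph{unweighted} trace; what is actually needed is $\operatorname{Trace}\left[(f^{\prime}(A_{m})-f^{\prime}(A_{m+1}))D\right]\geq 0$ for the particular positive weights $D=\sum_{k\leq m}(B_{k}-A_{k})$, which (as $D$ can be an essentially arbitrary positive matrix) amounts to $f^{\prime}(A_{m+1})\leq f^{\prime}(A_{m})$ in the operator order, i.e.\ to operator monotonicity of $f^{\prime}$ rather than mere scalar monotonicity. This is exactly the obstruction you ran into with the spectral projections $P_{k}$, and your example shows it is genuine, not a technicality. The paper's argument does close when $f^{\prime}$ is operator monotone on an interval containing all the spectra (for instance $f(x)=x^{2}$, so Chung's inequality and Theorem \ref{thmmaj1} are unaffected), and your commutativity hypothesis, which reduces everything to the scalar Tomi\'{c}--Weyl inequality, is a valid alternative repair; without some such additional assumption the theorem must be amended.
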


\begin{proof}
We will consider here the case where $f$ is continuously differentiable. The
general case can be deduced from this one by using approximation arguments.
Since the function $A\rightarrow\operatorname*{Trace}f(A)$ is convex on
$\mathcal{A}(\mathbb{R}^{N})$, for each $\lambda\in(0,1]$ we have
\[
\frac{\operatorname*{Trace}f\left(  A+\lambda(X-A)\right)
-\operatorname*{Trace}f(A)}{\lambda}\leq\operatorname*{Trace}%
f(X)-\operatorname*{Trace}f(A),
\]
whence we infer (by letting $\lambda\rightarrow0)$ that%
\[
\operatorname*{Trace}\left[  f^{\prime}(A)(X-A)\right]  \leq
\operatorname*{Trace}f(X)-\operatorname*{Trace}f(A).
\]
According to the bilinear form of Abel's partial summation formula
(\ref{B1}),
\begin{multline*}
\sum_{k=1}^{n}\left[  \operatorname*{Trace}f(B_{k})-\operatorname*{Trace}%
f(A_{k})\right]  \geq\sum_{k=1}^{n}\operatorname*{Trace}\left[  f^{\prime
}(A_{k})(B_{k}-A_{k})\right]  \\
=\sum_{k=1}^{n}\operatorname*{Trace}\left[  f^{\prime}(A_{n})\sum_{k=1}%
^{n}\left(  B_{k}-A_{k}\right)  \right]  \\
\qquad+\sum_{m=1}^{n-1}\operatorname*{Trace}\left[  (f^{\prime}(A_{m}%
)-f^{\prime}(A_{m+1}))\sum_{k=1}^{m}(B_{k}-A_{k})\right]
\end{multline*}
and the right hand side is a sum of nonnegative terms due to the fact that
\[
U\leq V\text{ in }\mathcal{A}(\mathbb{R}^{N})\text{ implies }%
\operatorname*{Trace}h(U)\leq\operatorname*{Trace}h(V).
\]
for all increasing and continuous functions $h:\mathbb{R\rightarrow R}.$ See
\cite{P}, Proposition 1, p. 288. The proof ends by noticing that the
derivative of any continuously differentiable function is increasing and continuous.
\end{proof}

An inspection of the argument of Theorem \ref{thmmaj2} shows that this result
also works for nondecreasing convex functions $f$ defined on an arbitrary
interval $I$ provided that they are continuous and the spectra of operators
$A_{k}$ and $B_{k}$ are included in $I.$ The variant of Theorem \ref{thmmaj2}
for log convex functions (such as $\operatorname*{Trace}\left(  e^{A}\right)
$) can be easily obtained using the same idea.

\medskip

\textbf{Acknowledgements}

The authors would like to thank Flavia-Corina Mitroi-Symeonidis for her
valuable comments.

The research of the first author was supported by Romanian National Authority
for Scientific Research CNCS -- UEFISCDI grant PN-II-ID-PCE-2011-3-0257.

\end{document}